\definecolor {processblue}{cmyk}{0.96,0,0,0}
  \newtheorem{The}{Theorem}[section]
  \newtheorem{Pro}[The]{Proposition}
  \newtheorem{Lem}[The]{Lemma}
  \newtheorem{Cor}[The]{Corollary}
  \newtheorem{Rem}[The]{Remark}
  \newtheorem{Examp}[The]{Example}
\newcommand{\bsm}{\begin{smallmatrix}}
\newcommand{\esm}{\end{smallmatrix}}
\newcommand{\bbm}{\begin{matrix}}
\newcommand{\ebm}{\end{matrix}}
\begin{document}

\title{ Connections between Representation-Finite and K\"othe Rings}

\author{Ziba Fazelpour}
\address{ Department of Mathematics\\
University of Isfahan\\
P.O. Box: 81746-73441, Isfahan, Iran\\ and School of Mathematics, Institute for Research in Fundamental Sciences (IPM), P.O. Box: 19395-5746, Tehran, Iran}
\email{z.fazelpour@ipm.ir}
\author{Alireza Nasr-Isfahani}
\address{Department of Mathematics\\
University of Isfahan\\
P.O. Box: 81746-73441, Isfahan, Iran\\ and School of Mathematics, Institute for Research in Fundamental Sciences (IPM), P.O. Box: 19395-5746, Tehran, Iran}
\email{nasr$_{-}$a@sci.ui.ac.ir / nasr@ipm.ir}

\subjclass[2000]{{16D70}, {16G60}, {16D90}}

\keywords{Left K\"othe rings, Representation-finite rings, Left $k$-cyclic rings, Morita equivalence}

\begin{abstract} A ring $R$ is called left $k$-cyclic if every left $R$-module is a direct sum of indecomposable modules which are homomorphic image of $_{R}R^k$. In this paper, we give a characterization of left $k$-cyclic rings. As a consequence, we give a characterization of left K\"othe rings, which is a generalization of K\"othe-Cohen-Kaplansky theorem. We also characterize rings which are Morita equivalent to a basic left $k$-cyclic ring. As a corollary, we show that $R$ is Morita equivalent to a basic left K\"othe ring if and only if $R$ is an artinian left multiplicity-free top ring.

\end{abstract}

\maketitle

\section{Introduction}
Let $R$ be an associative ring with unit. $R$ is called left pure semisimple if every
left $R$-module is a direct sum of finitely generated  left $R$-modules. $R$ is said to be
representation-finite if it is left artinian and has only finitely many non-isomorphic
finitely generated indecomposable left $R$-modules. According to Auslander \cite{Auslander}, Ringel and Tachikawa \cite{RT} and Fuller and Reiten \cite{FR}, a ring $R$ is representation-finite if and only if it is left and right pure semisimple. The pure semisimplicity conjecture, which says that left pure semisimple rings are representation-finite, is still open.

Recall that every finitely generated (abelian group) $\mathbb{Z}$-module is a direct sum of cyclic modules.
K\"{o}the proved that  artinian principal ideal rings have this property \cite{K}. A ring $R$ is called {\it left} (resp., {\it right}) {\it K\"{o}the} if every  left (resp., right) $R$-module is a direct sum of cyclic modules. K\"{o}the also posed the problem, known as K\"othe's problem, to classify the left (resp., right) K\"{o}the rings. The K\"othe's problem is still open.
Nakayama gave an example of a right K\"{o}the ring $R$ which is not a principal right ideal ring \cite[page 289]{Na}. Later, Cohen and Kaplansky proved that if a commutative ring $R$ is K\"othe, then $R$ is an artinian principal ideal ring \cite{CK}. In 1961, Kawada completely solved the K\"othe problem for the basic finite dimensional $K$-algebras \cite{Kawada1, Kawada2, Kawada3} (see also \cite{Ringel2}). Kawada's papers contain a set of 19 conditions which characterize Kawada algebras, as well as, the list of all possible finitely generated indecomposable modules. Ringel showed that any finite dimensional $K$-algebra of finite representation type is Morita equivalent to a K\"othe algebra. By using the multiplicity-free of top and soc of finitely generated indecomposable  modules, he also gave a characterization of Kawada algebras \cite{Ringel2}. Behboodi et al. proved that if  $R$ is a left K\"othe ring in which all idempotents are central, then $R$ is an artinian principal right ideal ring \cite{BGMS}. Recently Ghorbani et al.  gave a characterization of pure semisimple rings via $\rho$-dimension \cite{GNN}. They proved that for a ring $R$ and an epic class $\rho$ of indecomposable $R$-modules, $R$ is left and right pure semisimple if and only if every (left) right module is a direct sum of modules of $\rho$-dimension at most $n$ for some $n$, if and only if, ${\rm Mat}_n(R)$ is a left K\"othe ring for some positive integer $n$. In this paper,  we will continue the study of left K\"othe rings.

A ring $R$ is called {\it left $k$-cyclic} if every left $R$-module is a direct sum of indecomposable $k$-generated modules. In this paper, we first give a characterization of left $k$-cyclic rings. We show that $R$ is a left $k$-cyclic ring if and only if  $R$ is a left artinian ring and  for each finitely generated indecomposable  left $R$-module $M$, $c_i(top(M)) \leq kp_R(i)$ for each $1\leq i \leq m$, where $\lbrace e_1, \cdots, e_m\rbrace$ is a basic set of idempotents of $R$, ${R\cong \bigoplus_{i=1}^m {(Re_i)}^{p_R(i)}}$ and $c_i(top(M))$ is the number of composition factors of $top(M)$ which are isomorphic to the $Re_i/Je_i$ (see Theorem \ref{A7}). As a corollary, we give a characterization of left K\"othe rings (see Corollary \ref{A6}). In fact,  this corollary is a generalization of the K\"othe-Cohen-Kaplansky theorem and Theorem 3.1 of \cite{BGMS}. We also give a characterization of rings which are Morita equivalent to a basic left  $k$-cyclic ring. We show that a ring $R$ is Morita equivalent to a basic left K\"othe ring if and only if $R$ is an artinian left multiplicity-free top ring.
It is known that any left K\"othe ring is representation-finite. Finally, we show that a ring $R$ is representation-finite if and only if there exists a basic ring $S$ and a positive integer $n$ such that ${\rm Mat}_n(S)$ is a left K\"othe ring and $R$ is Morita equivalent to ${\rm Mat}_n(S)$ if and only if  $R$ is a left $n$-cyclic ring  for some positive integer $n$ if and only if ${\rm{Mat}}_m(R)$ is  a left K\"othe ring for some positive integer $m$.  Our results in this paper generalize and unify some results of \cite{BGMS}, \cite{CK}, \cite{GNN}, \cite{K} and \cite{Ringel2}.

The paper is organized as follows. In Section 2, we prove some preliminary results that will be needed later in the paper. In Section 3, we give a characterization of left $k$-cyclic rings and then we show that all known results about the K\"othe's problem are just corollary of this characterization. Finally in Section 4, we characterize the class of rings which are Morita equivalent to the given left K\"othe ring.
\subsection{Notation }
Throughout this paper, all rings have identity elements and all modules are unital. Let $R$ be a ring. We denote by $R$-Mod (resp., Mod-$R$) the category of all left (resp., right) $R$-modules and by $J$ the Jacobson radical of $R$. For a left $R$-module $M$, we denote by ${\rm soc}(M)$, ${\rm top}(M)$, ${\rm rad}(M)$ and $\ell(M)$ its socle, top, radical and length, respectively. Let $C$ and $D$ be two categories. We write $C \approx D$ in case $C$ and $D$ are equivalent. When two rings $R$ and $S$ are Morita equivalent (i.e., categories $R$-Mod and $S$-Mod are equivalent) we write $R \approx S$.

\section{Preliminaries}
A ring $R$ is called {\it semiperfect} if $R/J$ is a semisimple ring and idempotents lift modulo $J$. We recall that a set $\lbrace e_1, \cdots, e_m\rbrace$ of idempotents of a semiperfect ring $R$ is called {\it basic} in case they are pairwise orthogonal, $Re_i\ncong Re_j$ for each $i\neq j$  and for each indecomposable projective left $R$-module $P$, there exists $i$ such that $P \cong Re_i$. Clearly, the cardinal of any two basic sets of idempotents of a semiperfect ring $R$ are equal.  An idempotent $e$ of a semiperfect ring $R$ is called {\it basic idempotent} in case $e$ is the sum $e= e_1 + \cdots + e_m$ of a basic set $\lbrace e_1, \cdots, e_m\rbrace$ of idempotents of $R$. A semiperfect ring $R$ is called {\it basic} if $1_R$ is a basic idempotent (see \cite{Anderson}).\\

 Let $\lbrace e_1, \cdots, e_m\rbrace$ be a basic set of idempotents of a semiperfect ring $R$, $M$ be a left $R$-module of finite length and $1 \leq i \leq m$, we denote by $c_i(M)$, the number of composition factors of $M$ which are isomorphic to the $Re_i/Je_i$. Recall that in \cite{Warfield}, $c_i({\rm top}(M))$ is denoted by ${\rm Gen}(M, Re_i/Je_i)$.\\

The following proposition is a generalization of \cite[Lemma 1.8]{Warfield}.

 \begin{Pro}\label{A4}
 Let $R$ be a semiperfect ring, $M$ be a finitely generated left $R$-module and $k \in {\Bbb{N}}$. Then the following conditions are equivalent.
 \begin{itemize}
 \item[$(1)$] $M$ is a $k$-generated left $R$-module.
 \item[$(2)$] $c_i({\rm top}(M))  \leq kp_R(i)$ for each $1 \leq i \leq m$, where $\lbrace e_1, \cdots, e_m\rbrace$ is a basic set of idempotents of $R$ and ${R\cong \bigoplus_{i=1}^m {(Re_i)}^{p_R(i)}}$.
 \end{itemize}
 \end{Pro}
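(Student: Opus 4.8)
The plan is to translate the generation condition on $M$ into a direct-summand condition on its projective cover, and then to read off multiplicities by the Krull--Schmidt uniqueness of projective decompositions over a semiperfect ring. Since $R$ is semiperfect and $M$ is finitely generated, $M$ has a projective cover $\pi \colon P(M) \to M$, and $P(M)$ is also the projective cover of $\mathrm{top}(M) = M/JM$; note that $\mathrm{top}(M)$ is a finitely generated semisimple module, hence of finite length, so the invariants $c_i(\mathrm{top}(M))$ are well defined. Writing $\mathrm{top}(M) \cong \bigoplus_{i=1}^m (Re_i/Je_i)^{c_i(\mathrm{top}(M))}$ and using that the projective cover of $Re_i/Je_i$ is $Re_i$, I would obtain
\[
P(M) \cong \bigoplus_{i=1}^m (Re_i)^{c_i(\mathrm{top}(M))},
\qquad
R^k \cong \bigoplus_{i=1}^m (Re_i)^{kp_R(i)},
\]
the latter directly from the hypothesis $R\cong \bigoplus_{i=1}^m (Re_i)^{p_R(i)}$.

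The heart of the argument is the equivalence that $M$ is $k$-generated if and only if $P(M)$ is isomorphic to a direct summand of $R^k$. For the forward implication, a surjection $f \colon R^k \to M$ lifts, by projectivity of $R^k$, to a map $g \colon R^k \to P(M)$ with $\pi g = f$. Since $\pi$ is a projective cover, $\ker\pi$ is superfluous in $P(M)$; from $\pi(\mathrm{im}(g)) = \mathrm{im}(f) = M$ we get $\mathrm{im}(g) + \ker\pi = P(M)$, whence $\mathrm{im}(g) = P(M)$, so $g$ is surjective and splits because $P(M)$ is projective. This exhibits $P(M)$ as a direct summand of $R^k$. Conversely, composing the projection $R^k \to P(M)$ with $\pi$ yields a surjection $R^k \to M$, so $M$ is $k$-generated.

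Finally I would invoke that over a semiperfect ring each $Re_i$ is indecomposable with local endomorphism ring and the $Re_i$ are pairwise non-isomorphic (see \cite{Anderson}), so the Krull--Schmidt theorem governs finite direct sums of them: $\bigoplus_i (Re_i)^{a_i}$ is isomorphic to a direct summand of $\bigoplus_i (Re_i)^{b_i}$ precisely when $a_i \le b_i$ for every $i$. Applying this with $a_i = c_i(\mathrm{top}(M))$ and $b_i = kp_R(i)$ combines with the previous paragraph to give the equivalence of $(1)$ and $(2)$.

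I expect the main obstacle to lie in the careful handling of the projective cover: namely, verifying that the lift $g$ is surjective via the superfluousness of $\ker\pi$, and then ensuring the Krull--Schmidt bookkeeping of indecomposable summands is legitimate in the semiperfect (not necessarily artinian) setting, where the validity of the decomposition rests on the local endomorphism rings of the $Re_i$. The remaining steps are routine identifications of tops and projective covers.
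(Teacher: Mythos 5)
Your proposal is correct and follows essentially the same route as the paper's proof: identify the projective cover $P(M)\cong\bigoplus_i (Re_i)^{c_i(\mathrm{top}(M))}$, lift a surjection $R^k\to M$ through the cover and use superfluousness of the kernel to conclude $P(M)$ is a direct summand of $R^k$, then compare multiplicities. The only difference is one of exposition: you make explicit the Krull--Schmidt bookkeeping (via the local endomorphism rings of the $Re_i$) that the paper leaves implicit in the step ``$P(M)$ is a direct summand of $R^{(k)}$, hence $c_i(\mathrm{top}(M))\leq kp_R(i)$,'' which is a welcome clarification rather than a departure.
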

 \begin{proof}
 Assume that $R$ is a semiperfect ring and $M$ is a finitely generated left $R$-module. Then  ${R\cong \bigoplus_{i=1}^m {(Re_i)}^{p_R(i)}}$, where $m \in {\Bbb{N}}$, each $p_R(i) \in {\Bbb{N}}$ and $\lbrace e_1, \cdots, e_m \rbrace $ is a basic set of idempotents of $R$ and so by \cite[Corollary 15.18, Proposition 27.10]{Anderson}, ${\rm top}(M) \cong {(Re_1/Je_1)}^{s_1} \oplus \cdots \oplus {(Re_m/Je_m)}^{s_m}$. Thus there exists a projective cover $\rho: P(M) \rightarrow M$ of $M$, where $P(M) = {(Re_1)}^{s_1} \oplus \cdots \oplus {(Re_m)}^{s_m}$. Now assume that  $M$ is a $k$-generated left $R$-module. Then there exists an epimorphism $f: R^{(k)} \rightarrow M$. Therefore there exists a morphism $g:R^{(k)} \rightarrow P(M)$ such that ${\rho g = f}$.  Since ${\rm Ker}(\rho)$ is a superfluous submodule of $P(M)$,   $g$ is an epimorphism and hence  $P(M)$ is a direct summand of $R^{(k)}$. It follows that  $c_i({\rm top}(M)) \leq kp_R(i)$ for each $1 \leq i \leq m$.
Now assume that for each $1 \leq i \leq m$,  $c_i({\rm top}(M))  \leq kp_R(i)$. Then  $ s_i \leq kp_R(i)$ and so $P(M)$ is a direct summand of $R^{(k)}$. Therefore $M$ is a $k$-generated left $R$-module.
 \end{proof}

 One of the considerable properties of Morita theory which we use it in this paper is the fact that submodule lattices are preserved by equivalences. Let $M$ be a left $R$-module and $K$ be a submodule of $M$. We denote by $i_{K \leq M}: K \rightarrow M$ the inclusion monomorphism.

 \begin{Lem}\label{A2}
 Let $R$ and $S$ be Morita equivalent rings via an equivalence $F: R{\rm -Mod} \rightarrow {S{\rm -Mod}}$ and $M$ be a left $R$-module. If  ${\rm top}(M)$ is a semisimple left $R$-module, then $F({\rm top}(M)) \cong {\rm top}(F(M))$ as left $S$-modules.
 \end{Lem}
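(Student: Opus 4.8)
The plan is to reduce the claim to the single identity $F({\rm rad}(M)) = {\rm rad}(F(M))$ of submodules of $F(M)$, and then to establish this identity by a two-sided inclusion. Since $F$ is an equivalence it is exact and commutes with cokernels, so applying $F$ to the canonical short exact sequence $0 \to {\rm rad}(M) \to M \to {\rm top}(M) \to 0$ produces an exact sequence
\[
0 \longrightarrow F({\rm rad}(M)) \longrightarrow F(M) \longrightarrow F({\rm top}(M)) \longrightarrow 0,
\]
which identifies $F({\rm top}(M))$ with $F(M)/F({\rm rad}(M))$. Thus, once I know that $F({\rm rad}(M))$ coincides with ${\rm rad}(F(M))$, the desired isomorphism $F({\rm top}(M)) \cong {\rm top}(F(M))$ follows at once.

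First I would record the categorical facts I need. Simplicity is a categorical property — a module is simple exactly when it is a nonzero object with no nonzero proper subobject — so $F$ carries simple modules to simple modules, and hence semisimple modules to semisimple modules. Combined with exactness this shows that $F$ sends maximal submodules to maximal submodules: $L \subseteq M$ is maximal iff $M/L$ is simple iff $F(M)/F(L) \cong F(M/L)$ is simple iff $F(L)$ is maximal in $F(M)$. For the inclusion ${\rm rad}(F(M)) \subseteq F({\rm rad}(M))$ I would invoke the hypothesis that ${\rm top}(M)$ is semisimple: then $F({\rm top}(M))$ is semisimple, and since the radical of any module is contained in the kernel of every homomorphism into a semisimple module (radicals are functorial and the radical of a semisimple module vanishes), the surjection $F(M) \twoheadrightarrow F({\rm top}(M))$ with kernel $F({\rm rad}(M))$ forces ${\rm rad}(F(M)) \subseteq F({\rm rad}(M))$.

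For the reverse inclusion I would use the submodule-lattice preservation recalled above, namely that $F$ together with a quasi-inverse $G$ induces a lattice isomorphism ${\rm Sub}(M) \cong {\rm Sub}(F(M))$. Every maximal submodule of $F(M)$ is therefore of the form $F(L)$ for a (necessarily maximal) submodule $L$ of $M$; since ${\rm rad}(M) \subseteq L$ we get $F({\rm rad}(M)) \subseteq F(L)$, and intersecting over all maximal submodules of $F(M)$ gives $F({\rm rad}(M)) \subseteq {\rm rad}(F(M))$. The main obstacle is precisely this compatibility of $F$ with the radical, since the radical is an intersection of a possibly infinite family of maximal submodules; the semisimplicity hypothesis is what trivializes one of the two inclusions (via the universal property of the top as the largest semisimple quotient), leaving only the ``maximal submodules correspond under $F$'' half to be extracted from the lattice isomorphism. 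A secondary point to watch is that the identifications $F(M/{\rm rad}(M)) \cong F(M)/F({\rm rad}(M))$ and $FG \cong {\rm id}$ must be applied coherently as submodules of $F(M)$, not merely up to abstract isomorphism.
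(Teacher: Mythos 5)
Your proposal is correct and follows essentially the same route as the paper: both apply $F$ to the sequence $0 \to {\rm rad}(M) \to M \to {\rm top}(M) \to 0$, identify $F({\rm top}(M))$ with $F(M)/F({\rm rad}(M))$, use semisimplicity of $F({\rm top}(M))$ to get ${\rm rad}(F(M)) \subseteq F({\rm rad}(M))$, and use the Morita lattice isomorphism together with the maximal-submodule/simple-quotient correspondence for the reverse inclusion. The paper implements the same care you flag at the end by working with $\Lambda_M(K) = {\rm Im}\, F(i_{K \leq M})$ as a concrete submodule of $F(M)$.
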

 \begin{proof}
  Assume that ${\mathcal{L}}_M$ (resp., ${\mathcal{L}}_{F(M)}$) is the lattice of submodules of $M$ (resp., $F(M)$). We define the map  ${\Lambda_{M}: {\mathcal{L}}_M \rightarrow {\mathcal{L}}_{F(M)}}$ with  ${\Lambda_{M}(K) = Im F(i_{K \leq M})}$ for each $K \in {\mathcal{L}}_M$. Then by \cite[Proposition 21.7]{Anderson}, ${\Lambda_{M}}$ is a lattice isomorphism. Let  ${\rm top}(M)$ be a semisimple left $R$-module. We have an exact sequence\vspace{-2mm}
 \begin{center}
 $0 \longrightarrow {\rm rad}(M) \overset{i_{{\rm rad}(M) \leq M}}{\longrightarrow} M \longrightarrow {\rm top}(M) \longrightarrow 0$.
 \end{center}
  Then
 $0 \longrightarrow F({\rm rad}(M)) \overset{F(i_{{\rm rad}(M) \leq M})}{\longrightarrow} F(M) \longrightarrow F({\rm top}(M))\longrightarrow 0$
  is an exact sequence. Thus  $F(M)/\Lambda_M({\rm rad}(M)) \cong F({\rm top}(M))$. Since $F(M)/\Lambda_M({\rm rad}(M))$ is a semisimple left $S$-module, ${\rm rad}(F(M)) \subseteq \Lambda_M({\rm rad}(M))$. On the other hand, by \cite[Proposition 21.7]{Anderson} and the fact that a submodule $K$ of $M$ is maximal if and only if $M/K$ is a simple module, we have $\Lambda_M({\rm rad}(M)) \subseteq {\rm rad}(F(M))$. Therefore $F({\rm top}(M)) \cong {\rm top}(F(M))$ as left $S$-modules.
 \end{proof}

 \begin{Lem}\label{A1}
Let $R$ be a semiperfect ring and $M$ be a  finite length  left $R$-module. Then $\ell\left( _{{\rm End}_R(Re_i)}{{\rm Hom}_R(Re_i, M)}\right) = c_i(M)$ for each $1 \leq i \leq m$, where $\lbrace e_1, \cdots, e_m\rbrace$ is a basic set of idempotents of $R$.
\end{Lem}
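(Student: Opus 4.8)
The plan is a standard d\'evissage: I would show that, for fixed $i$, both $M \mapsto \ell\!\left(_{{\rm End}_R(Re_i)}{\rm Hom}_R(Re_i,M)\right)$ and $M \mapsto c_i(M)$ are \emph{additive} functions on the class of finite length left $R$-modules, and that they agree on simple modules; the equality for arbitrary $M$ then follows by induction on $\ell(M)$, peeling off one factor of a composition series. So the proof reduces to two tasks: establishing additivity on both sides, and computing the base case.

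Additivity of $c_i(-)$ is immediate from the Jordan--H\"older theorem, since concatenating composition series of the sub and quotient in a short exact sequence $0 \to M' \to M \to M'' \to 0$ produces one for $M$, whence $c_i(M) = c_i(M') + c_i(M'')$. For the other function the key point is that $Re_i$ is projective, so ${\rm Hom}_R(Re_i,-)$ is an exact functor; identifying ${\rm Hom}_R(Re_i,M) \cong e_iM$ as a module over ${\rm End}_R(Re_i) \cong e_iRe_i$, exactness carries the sequence above to a short exact sequence of ${\rm End}_R(Re_i)$-modules, and since length is additive on short exact sequences the desired additivity follows (finiteness of these lengths will drop out of the base case).

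The base case, which I expect to be the only genuinely computational step, is to evaluate ${\rm Hom}_R(Re_i, S_j)$ for the simple module $S_j := Re_j/Je_j$. A nonzero homomorphism $Re_i \to S_j$ is surjective with kernel a maximal submodule of $Re_i$; because $Re_i$ is indecomposable projective with local endomorphism ring, its unique maximal submodule is $Je_i$, so the image is $S_i$. As $\{e_1,\dots,e_m\}$ is a basic set, $S_i \cong S_j$ forces $i=j$, giving ${\rm Hom}_R(Re_i,S_j)=0$ when $i\neq j$. When $i=j$ we get ${\rm Hom}_R(Re_i,S_i) \cong e_iS_i \cong e_iRe_i/e_iJe_i$, and here the semiperfect hypothesis enters: it guarantees that ${\rm End}_R(Re_i) \cong e_iRe_i$ is local with $J(e_iRe_i)=e_iJe_i$, so $e_iRe_i/e_iJe_i$ is its residue division ring, a simple ${\rm End}_R(Re_i)$-module of length one. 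Hence $\ell\!\left(_{{\rm End}_R(Re_i)}{\rm Hom}_R(Re_i,S_j)\right)$ equals $1$ if $i=j$ and $0$ otherwise, which is exactly $c_i(S_j)$; the induction then yields the lemma. The main obstacle is therefore not conceptual but bookkeeping: making sure the ${\rm End}_R(Re_i)$-module structure on $e_iM$ is the correct one and that $e_iRe_i/e_iJe_i$ really is the simple module over the local ring $e_iRe_i$.
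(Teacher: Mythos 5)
Your proposal is correct and follows essentially the same route as the paper's proof: both arguments reduce to the base case on simple modules (where ${\rm Hom}_R(Re_i,S_j)$ vanishes for $j\neq i$ by the maximal-submodule argument and is simple for $j=i$) and then induct on a composition series using exactness of ${\rm Hom}_R(Re_i,-)$ and additivity of length. The only cosmetic difference is in the base case: you identify ${\rm Hom}_R(Re_i,S_i)$ with the residue division ring $e_iRe_i/e_iJe_i$ of the local ring $e_iRe_i$, whereas the paper shows simplicity directly by lifting any nonzero $\beta$ through any nonzero (hence surjective) $\alpha$ via projectivity of $Re_i$.
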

\begin{proof}
 Assume that $R$ is a semiperfect ring. Then  ${R\cong \bigoplus_{i=1}^m {(Re_i)}^{p_R(i)}}$, where $m \in {\Bbb{N}}$, each $p_R(i) \in {\Bbb{N}}$ and $\lbrace e_1, \cdots, e_m\rbrace$ is a basic set of idempotents of $R$.  Let $1 \leq i \leq m$. We  show that ${\rm Hom}_R(Re_i, Re_i/Je_i)$ is a simple left ${\rm End}_R(Re_i)$-module. Let $0\neq \alpha, \beta \in {\rm Hom}_R(Re_i, Re_i/Je_i)$. Then  there exists $f\in {\rm End}_R(Re_i)$ such that $\alpha f =\beta$. Therefore ${\rm Hom}_R(Re_i, Re_i/Je_i)$ is a simple left ${\rm End}_R(Re_i)$-module.
Now we show that for each $j\neq i$, ${\rm Hom}_R(Re_i, Re_j/Je_j)=0$. Let $i \neq j$ and $0\neq \gamma \in {\rm Hom}_R(Re_i, Re_j/Je_j)$. Then  $\gamma$ is an epimorphism and so ${\rm Ker}(\gamma)$ is a maximal submodule of $Re_i$. Therefore by \cite[Proposition 27.10]{Anderson}, $i=j$ which is a contradiction.  Let $M$ be a finite length left $R$-module. Then by the above argument and  \cite[Proposition 32.4]{Wi}, ${\rm Hom}(Re_i, M)$ is a finite length left ${\rm End}_R(Re_i)$-module. Now by induction on length of $M$ we show that  $\ell\left( _{{\rm End}_R(Re_i)}{{\rm Hom}_R(Re_i, M)}\right) = c_i(M)$ for each $1 \leq i \leq m$. Assume that $M$ is a simple left $R$-module. Then by \cite[Proposition 27.10]{Anderson}, there exists $1 \leq j \leq m$ such that $M \cong Re_j/Je_j$. Thus ${\rm Hom}_R(Re_i, M) \cong {\rm Hom}_R(Re_i, Re_j/Je_j)$ and so
$\ell\left( _{{\rm End}_R(Re_i)}{{\rm Hom}_R(Re_i, M)}\right) = \ell\left( _{{\rm End}_R(Re_i)}{{\rm Hom}_R(Re_i, Re_j/Je_j)}\right)$.
 Hence by the above argument,  ${\ell\left( _{{\rm End}_R(Re_i)}{{\rm Hom}_R(Re_i, M)}\right) = c_i(M)}$.
 Now assume that  $\ell(M) =t >1$ and   $0=M_t\subset M_{t-1} \subset \cdots \subset M_1 \subset M$ is a composition series for $M$. Then $0=M_{t-1}/M_{t-1} \subset M_{t-2}/M_{t-1}\subset \cdots \subset M_1/M_{t-1} \subset M/M_{t-1}$ is a composition series for  $M/M_{t-1}$. Therefore by the induction,  $\ell\left( _{{\rm End}_R(Re_i)}{{\rm Hom}_R(Re_i, M/M_{t-1})}\right) = c_i(M/M_{t-1})$. We consider the exact sequence  $0 \rightarrow M_{t-1} \rightarrow M \rightarrow M/M_{t-1} \rightarrow 0$. Then $0 \rightarrow {\rm Hom}_R(Re_i, M_{t-1}) \rightarrow {\rm Hom}_R(Re_i, M) \rightarrow {\rm Hom}_R(Re_i, M/M_{t-1}) \rightarrow 0$ is an exact sequence. So
 \begin{center}
 $\ell\left( _{{\rm End}_R(Re_i)}{{\rm Hom}_R(Re_i, M)}\right) = \ell\left( _{{\rm End}_R(Re_i)}{{\rm Hom}_R(Re_i, M_{t-1})}\right) + c_i(M/M_{t-1})$.
 \end{center}
If ${M_{t-1} \ncong Re_i/Je_i}$, then ${c}_i(M/M_{t-1}) = {c}_i(M)$ and  ${\rm Hom}_R(Re_i, M_{t-1}) = 0$. It follows that\\
$\ell\left( _{{\rm End}_R(Re_i)}{{\rm Hom}_R(Re_i, M)}\right) = {c}_i(M)$. Now assume that  ${M_{t-1} \cong Re_i/Je_i}$. Then  by the above argument,  ${\rm Hom}_R(Re_i, M_{t-1})$ is a simple  left ${\rm End}_R(Re_i)$-module. Consequently, $\ell\left( _{{\rm End}_R(Re_i)}{{\rm Hom}_R(Re_i, M)}\right) = {c}_i(M)$.
\end{proof}

Let $\lbrace M_1, \cdots, M_t \rbrace$ be the complete set of non-isomorphic finitely generated indecomposable left $R$-modules  and $\lbrace e_1, \cdots ,e_r \rbrace$ be a basic set of idempotents of $R$. For each $1 \leq l \leq r$, put
$q_R(l) = {\rm max} \lbrace c_l({\rm top}(M_j)) ~|~ 1 \leq  j \leq t \rbrace$.

 \begin{Pro}\label{A5}
 Let $R$ be a representation-finite ring which is Morita equivalent to a ring $S$. Then $r$ is the cardinal number of the basic set of idempotents of $S$ and $q_R(l) = q_S(l)$ for each $1\leq l \leq r$, where $r$ is the cardinal number of the basic set of idempotents of $R$.
 \end{Pro}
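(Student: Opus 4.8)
The plan is to exploit two standard features of a Morita equivalence $F\colon R\text{-Mod}\to S\text{-Mod}$: it restricts to a bijection between the isomorphism classes of simple (resp.\ finitely generated indecomposable) left $R$-modules and those of left $S$-modules, and---by Lemma \ref{A2}---it commutes with the formation of tops. Since $R$ is representation-finite it is left artinian, hence semiperfect, and the same holds for $S$ because representation-finiteness is a Morita invariant (in particular $S$ is left artinian, hence semiperfect, and has only finitely many non-isomorphic finitely generated indecomposable modules, so that $q_S$ is defined at all). Thus both rings possess basic sets of idempotents.

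First I would fix a basic set $\lbrace e_1,\dots,e_r\rbrace$ of idempotents of $R$, so that $\lbrace Re_l/Je_l\rbrace_{1\le l\le r}$ is a complete, irredundant list of the simple left $R$-modules. As $F$ sends simple modules to simple modules and preserves isomorphism, the modules $F(Re_l/Je_l)$ form a complete irredundant list of simple left $S$-modules. In a semiperfect ring the number of isomorphism classes of simple modules equals the cardinality of any basic set of idempotents (by \cite[Proposition 27.10]{Anderson}); therefore $S$ has a basic set of exactly $r$ idempotents, which settles the first assertion. I would then choose a basic set $\lbrace f_1,\dots,f_r\rbrace$ of $S$, labelled so that $F(Re_l/Je_l)\cong Sf_l/J_Sf_l$ for every $l$, where $J_S$ denotes the Jacobson radical of $S$; this is the labelling with respect to which the equality $q_R(l)=q_S(l)$ is to be read.

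For the second assertion I would transport the indecomposables along $F$. Let $\lbrace M_1,\dots,M_t\rbrace$ be the finitely generated indecomposable left $R$-modules; then $\lbrace F(M_1),\dots,F(M_t)\rbrace$ is, up to isomorphism, the complete set of finitely generated indecomposable left $S$-modules. Each $M_j$ has finite length, so ${\rm top}(M_j)$ is a finite semisimple module and decomposes as $\bigoplus_{l=1}^{r}(Re_l/Je_l)^{c_l({\rm top}(M_j))}$. Applying $F$ and using Lemma \ref{A2} together with the additivity of $F$ on direct sums gives
\[
{\rm top}(F(M_j))\cong F({\rm top}(M_j))\cong\bigoplus_{l=1}^{r}\bigl(Sf_l/J_Sf_l\bigr)^{c_l({\rm top}(M_j))}.
\]
By the uniqueness of semisimple decompositions this forces $c_l({\rm top}(F(M_j)))=c_l({\rm top}(M_j))$ for all $l$ and $j$. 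Taking the maximum over $j$ and recalling that $\lbrace F(M_j)\rbrace_j$ exhausts the finitely generated indecomposable $S$-modules yields $q_S(l)=q_R(l)$ for each $l$.

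The main thing to be careful about is the bookkeeping of labels rather than any deep obstruction: one must check that $q_S(l)$ is independent of the chosen basic set of $S$ (any two basic sets yield the same simple modules up to permutation, so this is routine), and that the labelling of $\lbrace f_l\rbrace$ induced by $F$ on simples is compatible with the one implicit in the statement. The only genuine inputs are the preservation of simples and of indecomposability under a Morita equivalence and the commutation of $F$ with tops supplied by Lemma \ref{A2}; once these are in hand the equality of multiplicities, and hence of the maxima $q_R(l)$ and $q_S(l)$, is immediate.
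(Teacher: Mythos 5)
Your proof is correct, but it reaches the key equality $q_R(l)=q_S(l)$ by a different route than the paper. The paper first identifies $\lbrace F(Re_1),\dots,F(Re_r)\rbrace$ with the complete set of indecomposable projective left $S$-modules (using the inverse equivalence $G$) to get the count $r$, and then computes the multiplicities $c_l$ \emph{intrinsically} as Hom-lengths: it invokes Lemma \ref{A1}, which says $c_l(M)=\ell\left({}_{{\rm End}_R(Re_l)}{\rm Hom}_R(Re_l,M)\right)$, together with the isomorphisms ${\rm Hom}_R(Re_l,{\rm top}(M_j))\cong{\rm Hom}_S(F(Re_l),F({\rm top}(M_j)))$, ${\rm End}_R(Re_l)\cong{\rm End}_S(F(Re_l))$, and Lemma \ref{A2}, so that the equality of multiplicities falls out of the invariance of Hom and End under the equivalence. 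You instead count simples directly: you decompose ${\rm top}(M_j)\cong\bigoplus_l(Re_l/Je_l)^{c_l({\rm top}(M_j))}$, push this through $F$ using additivity and Lemma \ref{A2}, and appeal to uniqueness of semisimple decompositions. Your argument is more elementary --- it bypasses Lemma \ref{A1} entirely (which in the paper exists essentially to serve this proposition) and needs only the preservation of simples, indecomposables, and tops. What the paper's Hom-length formulation buys is that the multiplicity $c_l$ is packaged as a quantity manifestly transported by any equivalence, so the matching of labels is mediated automatically by $Re_l\mapsto F(Re_l)$; you handle that bookkeeping explicitly (and correctly) by choosing the basic set $\lbrace f_1,\dots,f_r\rbrace$ of $S$ so that $F(Re_l/Je_l)\cong Sf_l/J_Sf_l$, a point the paper leaves implicit. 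You also prove the cardinality claim via simple modules rather than via indecomposable projectives; both are legitimate, since for semiperfect rings either count equals the size of a basic set of idempotents.
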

 \begin{proof}
Assume that  $R$ is a representation-finite ring. Then there exists a basic set of idempotents of $R$. Let  $R$ be Morita equivalent to a ring $S$  via an equivalence $F: R{\rm -Mod} \rightarrow {S{\rm -Mod}}$ and $\lbrace e_1,\cdots, e_r \rbrace$ be a basic set of idempotents of $R$. Then there exists a basic set of idempotents of $S$ and we show that $r$ is the cardinal number of the basic set of idempotents of $S$. It is enough to show that  $r$ is the cardinal number of the complete set of non-isomorphic  finitely generated indecomposable projective left $S$-modules. We know that $\lbrace F(Re_1), \cdots, F(Re_r)\rbrace$ is a set of  non-isomorphic  finitely generated indecomposable projective left $S$-modules. Let  $G:S{\rm -Mod} \rightarrow R{\rm -Mod}$ be the inverse equivalence of $F$ and $Q$ be a finitely generated indecomposable projective left $S$-module. Then $G(Q)$ is a finitely generated indecomposable projective left $R$-module and so ${G(Q) \cong Re_j}$ for some $1 \leq j \leq r$. Since $F(G(Q)) \cong Q$,  $Q \cong F(Re_j)$ for some $1 \leq j \leq r$. Thus $\lbrace F(Re_1), \cdots, F(Re_t)\rbrace$ is the complete set of non-isomorphic  finitely generated indecomposable projective left $S$-modules. Therefore $r$ is the cardinal number of the basic set of idempotents of $S$.  Now we show that  $q_R(l) = q_S(l)$ for each $1\leq l \leq r$.
Assume that  $\lbrace M_1, \cdots, M_t \rbrace$ is the complete set of non-isomorphic finitely generated indecomposable left $R$-modules. Then $\lbrace F(M_1), \cdots, F(M_t) \rbrace$ is the complete set of non-isomorphic  finitely generated indecomposable left $S$-modules. Let $1\leq l \leq r$ and $1\leq j \leq t$. Since  ${\rm Hom}_R(Re_l, {\rm top}(M_j)) \cong {\rm Hom}_S(F(Re_l), F({\rm top}(M_j)))$ and ${\rm End}_R(Re_l) \cong {\rm End}_S(F(Re_l))$,  $\ell\left(  _{{\rm End}_R(Re_l)}{\rm Hom}_R(Re_l, {\rm top}(M_j))   \right) = \ell\left( _{{\rm End}_S(F(Re_l))}{\rm Hom}_S(F(Re_l), F({\rm top}(M_j))) \right) $. Since by Lemma \ref{A2},
${\rm Hom}_S(F(Re_l), F({\rm top}(M_j))) \cong {\rm Hom}_S(F(Re_l), {\rm top}(F(M_j)))$, \\
 $\ell\left(  _{{\rm End}_R(Re_l)}{\rm Hom}_R(Re_l, {\rm top}(M_j))   \right) = \ell\left( _{{\rm End}_S(F(Re_l))}{\rm Hom}_S(F(Re_l), {\rm top}(F(M_j))) \right) $. Therefore by Lemma \ref{A1}, $q_R(l) = q_S(l)$.
 \end{proof}

\section{A characterization of left K\"othe rings}

We now give a characterization of left $k$-cyclic rings.

\begin{The}\label{A7}
Let $R$ be a ring and $k \in {\Bbb{N}}$. Then the following conditions are equivalent.
\begin{itemize}
\item[$(1)$] $R$ is a left $k$-cyclic ring.
\item[$(2)$] $R$ is a  left artinian ring and  for each finitely generated indecomposable  left $R$-module $M$, $c_i(top(M)) \leq kp_R(i)$ for each $1\leq i \leq m$, where $\lbrace e_1, \cdots, e_m\rbrace$ is a basic set of idempotents of $R$ and ${R\cong \bigoplus_{i=1}^m {(Re_i)}^{p_R(i)}}$.
\item[$(3)$] $R$ is a representation-finite ring and $q_R(i) \leq kp_R(i)$ for each $1\leq i \leq m$, where $\lbrace e_1, \cdots, e_m\rbrace$ is a basic set of idempotents of $R$ and ${R\cong \bigoplus_{i=1}^m {(Re_i)}^{p_R(i)}}$.
\end{itemize}
\end{The}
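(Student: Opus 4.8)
The plan is to establish the cycle $(1)\Rightarrow(2)\Rightarrow(3)\Rightarrow(1)$, using Proposition \ref{A4} throughout as the dictionary between ``$k$-generated'' and the numerical condition $c_i(\mathrm{top}(M))\le kp_R(i)$, and using the characterization of representation-finite rings as the left-and-right pure semisimple rings (\cite{Auslander,RT,FR}) to pass between these numerical conditions and module decompositions.

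For $(1)\Rightarrow(2)$ I would first note that a left $k$-cyclic ring is left pure semisimple, since every left module is a direct sum of indecomposable $k$-generated, hence finitely generated, modules. Consequently $R$ is left artinian, in particular semiperfect, so Proposition \ref{A4} applies. If $M$ is a finitely generated indecomposable left $R$-module, then writing $M$ as a direct sum of indecomposable $k$-generated modules and invoking indecomposability forces $M$ itself to be $k$-generated; Proposition \ref{A4} then yields $c_i(\mathrm{top}(M))\le kp_R(i)$ for every $i$.

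For $(3)\Rightarrow(1)$ I would use that a representation-finite ring is left pure semisimple, so every left module is a direct sum of finitely generated indecomposable modules. Let $M_1,\dots,M_t$ be a complete list of the latter. For each $M_j$ the hypothesis of $(3)$ gives $c_i(\mathrm{top}(M_j))\le q_R(i)\le kp_R(i)$, whence $M_j$ is $k$-generated by Proposition \ref{A4}. Thus every left module decomposes as a direct sum of indecomposable $k$-generated modules, i.e.\ $R$ is left $k$-cyclic.

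The substance of the theorem lies in $(2)\Rightarrow(3)$. Proposition \ref{A4} converts the hypothesis into the statement that every finitely generated indecomposable left module is $k$-generated, hence a quotient of ${}_RR^{k}$ and so of length at most $k\,\ell({}_RR)<\infty$; thus the lengths of the finitely generated indecomposable left modules are uniformly bounded. The crux, and the step I expect to be the main obstacle, is to upgrade this bound on lengths to genuine representation-finiteness. This is a Brauer--Thrall I phenomenon: I would invoke the theorem that a left artinian ring admitting a bound on the lengths of its finitely generated indecomposable left modules has, up to isomorphism, only finitely many such modules. The delicate point is that the uniform bound on \emph{all} indecomposables is genuinely needed: merely controlling the ``small'' indecomposables does not prevent infinitely many of them (the Kronecker algebra has infinitely many cyclic indecomposable modules), and it is precisely the \emph{absence} of arbitrarily large indecomposables, forced here by the generator bound, that the Harada--Sai argument converts into finiteness. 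Once representation-finiteness is in hand, $q_R(i)=\max_{1\le j\le t}c_i(\mathrm{top}(M_j))$ is a maximum of finitely many quantities each bounded by $kp_R(i)$, so $q_R(i)\le kp_R(i)$ and $(3)$ follows.
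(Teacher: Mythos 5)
Your proposal is correct and follows essentially the same route as the paper: the same cycle $(1)\Rightarrow(2)\Rightarrow(3)\Rightarrow(1)$, with Proposition \ref{A4} as the dictionary between $k$-generation and the inequalities $c_i({\rm top}(M))\leq kp_R(i)$, and with the bounded-length-implies-representation-finite theorem (the paper cites \cite[Proposition 54.3]{Wi}, whose content is exactly the Harada--Sai/Brauer--Thrall I argument you describe) as the crux of $(2)\Rightarrow(3)$. The only difference is presentational: you spell out the content of the cited results (pure semisimplicity, the length bound $\ell(M)\leq k\,\ell({}_RR)$), where the paper simply cites \cite[Propositions 53.6 and 54.3]{Wi}.
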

\begin{proof}
$(1) \Rightarrow (2).$ Assume that $R$ is a left $k$-cyclic ring. Then by \cite[Proposition 53.6]{Wi}, $R$ is a left artinian ring and so ${R\cong \bigoplus_{i=1}^m {(Re_i)}^{p_R(i)}}$, where $m \in {\Bbb{N}}$, each $p_R(i) \in {\Bbb{N}}$ and $\lbrace e_1, \cdots, e_m\rbrace$ is a basic set of idempotents of $R$. Let $M$ be a finitely generated indecomposable left $R$-module and ${1 \leq i \leq m}$. Then $M$ is $k$-generated and so by Proposition \ref{A4}, $c_i({\rm top}(M)) \leq kp_R(i)$.\\
$(2) \Rightarrow (3).$ It follows from \cite[Proposition 54.3]{Wi}.\\
$(3) \Rightarrow (1).$ Assume that $R$ is a  representation-finite ring and $q_R(i) \leq kp_R(i)$ for each $1\leq i \leq m$, where $\lbrace e_1, \cdots, e_m\rbrace$ is a basic set of idempotents of $R$ and ${R\cong \bigoplus_{i=1}^m {(Re_i)}^{p_R(i)}}$.  Let   $\lbrace N_1, \cdots, N_s \rbrace$ be the complete set of non-isomorphic finitely generated indecomposable left $R$-modules and $1 \leq i \leq m$.  Then ${c_i({\rm top}(N_j))  \leq kp_R(i)}$ for each $1 \leq j \leq s$ and so by Proposition \ref{A4},  $N_j$ is a $k$-generated left $R$-module. Let $M$ be a left $R$-module. Since $R$ is  representation-finite, by \cite[Propositions 53.6 and 54.3]{Wi}, $M$ is a direct sum of finitely generated indecomposable left $R$-modules. Thus $R$ is a left $k$-cyclic ring.
\end{proof}

As immediate consequence of Theorem \ref{A7}, we have the following result.

\begin{Cor} \label{A6}
The following conditions are equivalent for a ring $R$.
\begin{itemize}
\item[$(1)$] $R$ is a left K\"othe ring.
\item[$(2)$] $R$ is a  left artinian ring and  for each finitely generated indecomposable  left $R$-module $M$, $c_i(top(M)) \leq p_R(i)$ for each $1\leq i \leq m$, where $\lbrace e_1, \cdots, e_m\rbrace$ is a basic set of idempotents of $R$ and ${R\cong \bigoplus_{i=1}^m {(Re_i)}^{p_R(i)}}$.
\item[$(3)$] $R$ is a representation-finite ring and $q_R(i) \leq p_R(i)$ for each $1\leq i \leq m$, where $\lbrace e_1, \cdots, e_m\rbrace$ is a basic set of idempotents of $R$ and ${R\cong \bigoplus_{i=1}^m {(Re_i)}^{p_R(i)}}$.
\end{itemize}
\end{Cor}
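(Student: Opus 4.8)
The plan is to reduce Corollary \ref{A6} to the special case $k = 1$ of Theorem \ref{A7}. Indeed, conditions $(2)$ and $(3)$ of the corollary are, verbatim, conditions $(2)$ and $(3)$ of Theorem \ref{A7} once we substitute $k = 1$ (so that $kp_R(i) = p_R(i)$), while condition $(1)$ of the corollary asks that $R$ be left K\"othe. Thus the only thing that genuinely needs to be checked is that a ring $R$ is left K\"othe if and only if it is left $1$-cyclic; granting this, the three equivalences follow immediately by setting $k = 1$ in Theorem \ref{A7}.

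First I would dispose of the easy implication: a left $1$-cyclic ring is left K\"othe, because an indecomposable $1$-generated module is in particular cyclic, so a direct sum of indecomposable $1$-generated modules is automatically a direct sum of cyclic modules. For the converse, suppose $R$ is left K\"othe. Since cyclic modules are finitely generated, every left $R$-module is then a direct sum of finitely generated modules, so by \cite[Propositions 53.6 and 54.3]{Wi} the ring $R$ is left artinian and, moreover, every left $R$-module is a direct sum of finitely generated indecomposable modules. The key observation is that every finitely generated indecomposable left $R$-module $M$ must itself be cyclic: regarding $M$ as a left $R$-module and applying the left K\"othe hypothesis writes $M$ as a direct sum of cyclic modules, but indecomposability forces exactly one summand to be nonzero, so $M$ is cyclic, i.e.\ $1$-generated. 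Combining these two facts shows that every left $R$-module is a direct sum of indecomposable $1$-generated modules, so $R$ is left $1$-cyclic.

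The argument poses no serious obstacle, and the one point I would treat carefully is the order in which the K\"othe property is invoked: the indecomposable summands coming out of an \emph{arbitrary} K\"othe decomposition are not manifestly cyclic, which is precisely why one first passes, via \cite[Propositions 53.6 and 54.3]{Wi}, to a decomposition into finitely generated indecomposables, and only then applies the K\"othe hypothesis to each such indecomposable summand individually to see that it is cyclic. With the equivalence ``left K\"othe $\Leftrightarrow$ left $1$-cyclic'' in hand, specializing Theorem \ref{A7} to $k = 1$ yields all three stated equivalences and completes the proof.
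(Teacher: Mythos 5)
Your proposal is correct and matches the paper's approach: the paper proves this corollary simply by specializing Theorem \ref{A7} to $k=1$, exactly as you do. The only difference is that you explicitly verify the bridge ``left K\"othe $\Leftrightarrow$ left $1$-cyclic'' (indecomposability forces each summand of a K\"othe decomposition of an indecomposable to be cyclic, after first decomposing into finitely generated indecomposables via \cite[Propositions 53.6 and 54.3]{Wi}), a step the paper leaves implicit in calling the corollary an ``immediate consequence.''
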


A finitely generated indecomposable left $R$-module $M$ is called {\it  multiplicity-free top}  if composition factors of top$(M)$ are pairwise non-isomorphic. Also,  a ring $R$ is called {\it left multiplicity-free top} if every finitely generated indecomposable left $R$-module is multiplicity-free top. A ring $R$ is called {\it multiplicity-free top} if it is a left and  right multiplicity-free top ring (see \cite{Ringel2}).

\begin{Cor}
Let $R$ be a basic ring. Then $R$ is a left K\"othe ring if and only if $R$ is an artinian left multiplicity-free top ring.
\end{Cor}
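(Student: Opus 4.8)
The plan is to deduce the statement directly from Corollary \ref{A6} by specializing to a basic ring. First I would record the key reduction: for a basic ring $R$ one has $p_R(i) = 1$ for every $1 \leq i \leq m$. Indeed, since $1_R = e_1 + \cdots + e_m$ is a basic idempotent, $R = Re_1 \oplus \cdots \oplus Re_m$ with the summands $Re_i$ pairwise non-isomorphic, so in $R \cong \bigoplus_{i=1}^m (Re_i)^{p_R(i)}$ each multiplicity $p_R(i)$ is $1$. Consequently the inequality in Corollary \ref{A6}$(2)$ becomes simply $c_i({\rm top}(M)) \leq 1$ for each $i$.

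Next I would observe that this numerical condition is just a reformulation of being multiplicity-free top. For any finitely generated indecomposable $M$, the module ${\rm top}(M)$ is semisimple and, by \cite[Proposition 27.10]{Anderson}, every simple left $R$-module is isomorphic to exactly one $Re_i/Je_i$. Hence $c_i({\rm top}(M))$ is precisely the multiplicity of the simple type $Re_i/Je_i$ among the composition factors of ${\rm top}(M)$, and ``$c_i({\rm top}(M)) \leq 1$ for all $i$'' holds if and only if those composition factors are pairwise non-isomorphic, that is, if and only if $M$ is multiplicity-free top. Asking this of every finitely generated indecomposable $M$ is exactly the assertion that $R$ is left multiplicity-free top.

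With these two translations in hand the equivalence follows in both directions. For the forward implication, if $R$ is a basic left K\"othe ring then Corollary \ref{A6} shows it is representation-finite, hence left artinian; as representation-finite rings are left and right pure semisimple and therefore (two-sided) artinian, $R$ is artinian, while Corollary \ref{A6}$(2)$ together with $p_R(i)=1$ yields $c_i({\rm top}(M)) \leq 1$ for all indecomposable $M$, so $R$ is left multiplicity-free top. Conversely, if $R$ is an artinian left multiplicity-free top basic ring, then $R$ is in particular left artinian and, by the reformulation above, $c_i({\rm top}(M)) \leq 1 = p_R(i)$ for every finitely generated indecomposable $M$ and every $i$; condition $(2)$ of Corollary \ref{A6} is met, so $R$ is left K\"othe.

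Since the whole argument amounts to a dictionary between the numerical condition of Corollary \ref{A6} and the combinatorial notion of a multiplicity-free top, I expect no serious difficulty. The one point requiring attention is the placement of the artinian hypothesis on the right-hand side: being left multiplicity-free top is a condition on indecomposables that on its own need not force representation-finiteness, so the artinian assumption is precisely what lets us invoke Corollary \ref{A6}$(2)$ in the converse direction; in the forward direction it comes for free, since left K\"othe rings are already representation-finite.
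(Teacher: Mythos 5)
Your proof is correct and takes essentially the same route as the paper, which simply cites Corollary \ref{A6} together with \cite[Proposition 54.3]{Wi}: in the basic case $p_R(i)=1$, so the numerical condition $c_i({\rm top}(M))\leq p_R(i)$ is literally the multiplicity-free top condition. The only cosmetic difference is how two-sided artinianness is obtained in the forward direction --- you invoke the equivalence of representation-finiteness with left and right pure semisimplicity, whereas the paper gets it directly from \cite[Proposition 54.3]{Wi}; both are legitimate.
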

\begin{proof}
It follows from Corollary \ref{A6} and \cite[Proposition 54.3]{Wi}.
\end{proof}
In the following, we show that Corollary \ref{A6} is a generalization of the K\"othe-Cohen-Kaplansky theorem and Theorem 3.1 of \cite{BGMS}. In fact, all known results related to the characterization of left K\"othe rings obtain from Corollary \ref{A6}.\\

 A left $R$-module $M$ is called {\it local} if it has a unique maximal submodule which contains any proper submodule of $M$. A ring $R$ is called {\it of left local type} if every finitely generated indecomposable left $R$-module is local (see \cite{Sumioka}). An idempotent  $e \in R$ is called {\it left {\rm (}resp., right{\rm )} semicentral}  if $Re = eRe$ (resp., $eR = eRe$)(see \cite{Kim}).

\begin{Pro}\label{KCK}
Let $R$ be a semiperfect ring that all primitive idempotents of $R$ are left semicentral. Then the following conditions are equivalent.
\begin{itemize}
\item[$(1)$] $R$ is a left multiplicity-free top ring.
\item[$(2)$] $R$ is of left local type.
\end{itemize}
\end{Pro}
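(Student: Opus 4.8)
The plan is to reformulate both conditions in terms of the top of a module, and then use the left-semicentral hypothesis to force a product decomposition of $R$. First I would record the standard fact that, for a finitely generated module $M$ over a semiperfect ring, $M$ is local if and only if $\mathrm{top}(M)$ is simple: if $M$ has a unique maximal submodule $N$ containing every proper submodule, then $N=\mathrm{rad}(M)$ and $\mathrm{top}(M)=M/N$ is simple, while conversely a simple $\mathrm{top}(M)$ makes $\mathrm{rad}(M)$ itself maximal, and every proper submodule of the finitely generated module $M$ lies in a maximal submodule, which must then equal $\mathrm{rad}(M)$. With this reformulation the implication $(2)\Rightarrow(1)$ is immediate and uses no hypothesis, since a simple top is automatically multiplicity-free.

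The content lies in $(1)\Rightarrow(2)$, where the key step is to show that the hypothesis forces $R$ to be a finite direct product of local rings. I would fix a complete set $\{f_1,\dots,f_n\}$ of orthogonal primitive idempotents with $1=\sum_k f_k$, and use that $e$ is left semicentral exactly when $(1-e)Re=0$. For $k\neq l$ orthogonality gives $f_l=f_l(1-f_k)$, whence $f_lRf_k=f_l(1-f_k)Rf_k=0$, and symmetrically $f_kRf_l=0$; thus every off-diagonal Peirce component vanishes and $R=\bigoplus_k f_kRf_k$. Each $f_k$ is then central, since for every $r\in R$ one has $f_kr=\sum_l f_krf_l=f_krf_k=\sum_l f_lrf_k=rf_k$, so $R\cong\prod_{k=1}^n f_kRf_k$ as rings. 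As $f_k$ is a primitive idempotent of a semiperfect ring, the corner $L_k:=f_kRf_k$ is a local ring, with a unique simple module $S_k$.

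To finish I would transport the module theory across this product: every left $R$-module splits as $M=\bigoplus_k f_kM$, where $f_kM$ is a module over the single factor $L_k$. If $M$ is finitely generated and indecomposable, exactly one summand survives, say $M=f_kM$, so $\mathrm{top}(M)\cong S_k^{\,r}$ with $r\geq1$ by Nakayama. The multiplicity-free-top hypothesis says $c_i(\mathrm{top}(M))\leq1$ for all $i$, i.e.\ $r\leq1$, forcing $r=1$; hence $\mathrm{top}(M)$ is simple and $M$ is local. This establishes $(1)\Rightarrow(2)$ and completes the equivalence.

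I expect the Peirce-component computation establishing the product-of-local-rings structure to be the only real obstacle. Once $R\cong\prod_k L_k$ is available, the fact that each local factor carries a unique simple module collapses ``multiplicity-free top'' and ``simple top'' into one and the same condition on each indecomposable, after which both implications are routine.
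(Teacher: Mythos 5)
Your proposal is correct and takes essentially the same route as the paper: both use the left-semicentral hypothesis to decompose $R$ into a direct sum of the local corner rings $f_kRf_k$, observe that a finitely generated indecomposable module is concentrated in a single factor, and conclude that a multiplicity-free top must then be simple, with $(2)\Rightarrow(1)$ immediate. Your version is merely more self-contained, verifying the vanishing of the off-diagonal Peirce components and the ``local iff simple top'' equivalence explicitly where the paper cites Anderson--Fuller.
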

\begin{proof}
$(1) \Rightarrow (2).$ Since $R$ is a semiperfect ring, $R = \bigoplus_{i=1}^nRe_i$  with $e_1 + \cdots+ e_n = 1_R$, where $e_1,\cdots, e_n$ are orthogonal primitive idempotents of $R$ and each $e_iRe_i$ is a local ring. Set $R_i= e_iRe_i$  for each $1 \leq i \leq n$. Since all primitive idempotents of $R$ are left semicentral,  $R = \bigoplus_{i=1}^nR_i$ is a basic ring. Therefore by \cite[Proposition 27.10]{Anderson},  $\lbrace Re_1/Je_1, \cdots ,Re_n/Je_n \rbrace$  is the complete set of non-isomorphic simple left $R$-modules. Let $M$ be a finitely generated indecomposable left $R$-module.  Since  $R$ is  left multiplicity-free top, ${\rm top}(M) = S_1 \oplus \cdots \oplus S_t$, where $t \leq n$ and  $ S_j \cong Re_j/Je_j$ for each $j$. On the other hand, since $R$ is a finite direct sum of the rings $R_i$, there exists $1 \leq i \leq n$ such that $M$ is a  left $R_i$-module.
 Let $1 \leq l \leq t$. Then  $R_jS_l=0$ for each $j \neq i$. It follows that $l=i$ and so  ${\rm top}(M) = S_i$. Therefore $R$ is of left local type.\\
$(2) \Rightarrow (1)$ is clear.
\end{proof}

Recall that a left $R$-module $M$ is called {\it uniserial} if its  submodules are linearly ordered by inclusion. Also, a  ring $R$ is called   {\it left {\rm (}resp., right{\rm )}  uniserial} if it is uniserial  as a left (resp., right) $R$-module (see \cite{Anderson}).

 \begin{The}\label{KcK} Let $R$ be a left artinian ring that all primitive idempotents of $R$ are left semicentral. If $R$ is a left multiplicity-free top ring, then $R$ is an artinian principal right ideal ring.
 \end{The}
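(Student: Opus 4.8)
The plan is to combine Proposition \ref{KCK} with the structure theory already developed. Since $R$ is left artinian and all primitive idempotents are left semicentral, Proposition \ref{KCK} tells me that the hypothesis ``left multiplicity-free top'' is equivalent to $R$ being of left local type, so I may freely assume that every finitely generated indecomposable left $R$-module is local. The proof of Proposition \ref{KCK} also shows that under the left semicentral hypothesis $R$ decomposes as a finite direct sum of blocks $R = \bigoplus_{i=1}^n R_i$ with $R_i = e_i R e_i$ a local ring, and that $R$ is basic. I would first record this block decomposition, since it reduces the whole problem to each block separately: $R$ is an artinian principal right ideal ring precisely when each $R_i$ is, and each $R_i$ is of left local type because the property passes to direct summands.

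Having reduced to a local ring, the goal becomes showing that a left artinian, local, left-local-type ring is right uniserial, since a direct sum of right uniserial artinian rings is an artinian principal right ideal ring. First I would argue that ``of left local type'' forces the left module structure to be severely restricted: in a local basic block the only simple module is $Re_i/Je_i$, so every finitely generated indecomposable left module $M$ has local top, i.e.\ $M$ is generated by a single element. In particular every finitely generated (hence every) left ideal and every cyclic left module is indecomposable-local or a direct sum of such, and I would use this to deduce that the lattice of left ideals, or rather the relevant one-sided structure, is linearly ordered. The key mechanism is that being of left local type is known to imply that $R$ is left serial (a Nakayama-type conclusion), and combined with locality of the ring this yields uniseriality.

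The main technical step, and the one I expect to be the obstacle, is passing from the left-sided hypothesis (local type on \emph{left} modules) to a \emph{right}-sided conclusion (principal right ideals, right uniseriality). The left semicentral condition $Re_i = e_i R e_i$ is exactly what bridges the two sides: it forces $R e_i J = e_i J e_i \subseteq e_i R e_i$, collapsing the off-diagonal Peirce components so that left and right ideal structures within a block are governed by the same local ring $R_i$. I would exploit this to show that each block $R_i$, being local artinian and left uniserial, has its radical powers $J_i^k$ forming the complete chain of both left and right ideals, whence $R_i$ is a uniserial ring on both sides and in particular a principal right ideal ring. Assembling the blocks, $R \cong \bigoplus_i R_i$ is then an artinian principal right ideal ring, completing the argument; the delicate point throughout is justifying that the semicentral hypothesis genuinely symmetrizes the one-sided finiteness into a two-sided uniserial conclusion rather than merely giving a principal left ideal structure.
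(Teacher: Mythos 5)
Your reduction is fine as far as it goes: using Proposition \ref{KCK} to replace ``left multiplicity-free top'' by ``left local type,'' splitting $R = \bigoplus_{i=1}^n R_i$ into local blocks $R_i = e_iRe_i$ via the semicentral hypothesis, and noting that left local type passes to blocks is exactly how the paper's proof begins. The genuine gap is in your key mechanism for handling a single block. You claim that ``being of left local type is known to imply that $R$ is left serial,'' deduce that each block is left uniserial, and then plan to use the semicentral condition to symmetrize this into two-sided uniseriality. Both halves of this are wrong. First, the sides are crossed: left local type forces a \emph{right} serial conclusion, not a left one. The paper's own Example \ref{E1} refutes your intermediate claim: $R = F[x;\alpha]/\langle x^2\rangle$ there is a local artinian left multiplicity-free top ring (hence of left local type) whose maximal ideal satisfies $\mathcal{M} = Rx \oplus Rxa$, so it is not left uniserial and not even a principal left ideal ring; it is right uniserial only. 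Consequently your final assertion that each block is ``uniserial on both sides'' is also false in general. Second, inside a local block the left semicentral hypothesis is vacuous: the only primitive idempotent of a local ring is $1$, and $R\cdot 1 = 1\cdot R\cdot 1$ trivially, so it cannot bridge the two sides. Its only role in the theorem is to produce the block decomposition you already used; the one-sided-to-other-sided transfer must come from somewhere else.

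That ``somewhere else'' is the actual content of the paper's proof: it invokes \cite[Theorem 2.4]{Sing} (Singh and Al-Bleehed), which states that a local ring of left local type is right uniserial --- the crossing of sides is built into that cited theorem, not derived from semicentrality. The paper then checks that each block is artinian (left local type plus left artinian gives a finite bound on lengths of finitely generated indecomposables, so \cite[Proposition 54.3]{Wi} applies) and concludes via \cite[Proposition 56.3]{Wi} that $R$, a finite direct sum of artinian right uniserial rings, is an artinian principal right ideal ring. To repair your argument you would need to replace your left-serial-plus-symmetrization step by the Singh--Al-Bleehed result or an independent proof of it; without that, the proof does not go through.
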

\begin{proof} Assume that $R$ is a left multiplicity-free top ring. Then by Proposition \ref{KCK}, $R$ is of left local type. Since $R$ is  left artinian, $R = \bigoplus_{i=1}^nRe_i$ with $e_1 + \cdots+ e_n = 1_R$, where $e_1,\cdots, e_n$ are orthogonal primitive idempotents of $R$ and each $e_iRe_i$ is a local ring. Set $R_i= e_iRe_i$  for each $1 \leq i \leq n$. Since all primitive idempotents of $R$ are left semicentral, $R = \bigoplus_{i=1}^nR_i$, where each $R_i$ is a local ring.  It follows that each $R_i$ is of left local type.  Consequently, by \cite[Theorem 2.4]{Sing},  each $R_i$ is a right uniserial ring. On the other hand, since each $R_i$ is a left artinian ring of left local type, there is a finite upper bound for the lengths of finitely generated indecomposable modules in $R_i$-Mod. Thus by \cite[Proposition 54.3]{Wi}, each $R_i$ is an artinian ring. Therefore each $R_i$ is an artinian right uniserial ring. Consequently, by \cite[Proposition 56.3]{Wi}, $R$ is an artinian principal right ideal ring.
\end{proof}

As  consequences of Corollary \ref{A6}, Theorem \ref{KcK} and \cite{K}, we have the following results.

\begin{Cor}\label{KKK} Let $R$ be a left artinian ring that all primitive idempotents of $R$ are left semicentral. Then $R$ is a multiplicity-free top ring if and only if  $R$ is an artinian principal ideal ring.
\end{Cor}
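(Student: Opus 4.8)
The plan is to exploit the product decomposition that the hypotheses force, together with the left--right symmetry it creates, and to feed each direction into the appropriate earlier result. First I would record the structural consequence of the standing assumptions. Since $R$ is left artinian it is semiperfect, and since every primitive idempotent is left semicentral, the computation already carried out in the proofs of Proposition~\ref{KCK} and Theorem~\ref{KcK} shows that $R=\bigoplus_{i=1}^{n}R_i$ is a finite ring direct product of the local rings $R_i=e_iRe_i$. The point I want to extract is twofold: the $e_i$ are then central (left semicentrality kills the off-diagonal pieces $e_jRe_i$ for $j\neq i$), so they are \emph{right} semicentral as well; and $R$ is basic, so in the decomposition ${R\cong\bigoplus_{i=1}^{m}(Re_i)^{p_R(i)}}$ one has $p_R(i)=1$ for every $i$. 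This last equality is the key simplification, because for a basic ring the bound $c_i(\mathrm{top}(M))\le p_R(i)$ appearing in Corollary~\ref{A6} reads $c_i(\mathrm{top}(M))\le 1$, which is precisely the assertion that $M$ is multiplicity-free top.

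For the forward implication I would suppose $R$ is a multiplicity-free top ring. Being left multiplicity-free top, it satisfies the hypotheses of Theorem~\ref{KcK}, which yields that $R$ is an artinian principal right ideal ring; in particular $R$ has now become two-sided artinian. Because the primitive idempotents are central they are in particular right semicentral, so the hypotheses of Theorem~\ref{KcK} hold verbatim for the opposite ring, and applying its right-hand analogue to the right multiplicity-free top condition shows that $R$ is also an artinian principal left ideal ring. Combining the two sides, $R$ is an artinian principal ideal ring.

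For the converse I would assume $R$ is an artinian principal ideal ring. By K\"othe's theorem \cite{K} every left and every right $R$-module is then a direct sum of cyclic modules, so $R$ is simultaneously a left and a right K\"othe ring. Now I would invoke Corollary~\ref{A6} together with its right-hand analogue: using $p_R(i)=1$ from the first paragraph, the characterization gives $c_i(\mathrm{top}(M))\le 1$ for every finitely generated indecomposable left module and for every such right module $M$. That is exactly the statement that $R$ is left and right multiplicity-free top, hence multiplicity-free top, closing the equivalence.

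The routine parts are the passages through the cited structure theorems; the hard part will be justifying that the left--right symmetry is genuinely available. Concretely, I must verify that left semicentrality of the primitive idempotents actually promotes to centrality once the product decomposition is in hand---so that the opposite-ring versions of Theorem~\ref{KcK} and Corollary~\ref{A6} legitimately apply---and that $R$ is indeed two-sided artinian before any right-hand argument is run (this I get for free in the forward direction from Theorem~\ref{KcK}, and from the product-of-local-rings description in the converse). Once centrality and two-sided artinianness are secured, both directions are mirror images of one another and the proof is complete.
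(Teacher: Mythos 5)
Your proof is correct and takes essentially the same route as the paper, which presents this corollary precisely as a consequence of Theorem \ref{KcK} (applied on both sides, with the right-hand application justified exactly as you do, by observing that left semicentrality of the primitive idempotents forces them to be central), K\"othe's theorem \cite{K}, and Corollary \ref{A6} with $p_R(i)=1$. Your preliminary structural observations (the decomposition into local rings and the basicness of $R$) are the same ones the paper establishes in the proofs of Proposition \ref{KCK} and Theorem \ref{KcK}.
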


\begin{Cor}\label{G2}{\rm(\cite[K\"othe-Cohen-Kaplansky Theorem]{K, CK})}
A commutative ring $R$ is a K\"othe ring if and only if $R$ is an artinian principal ideal ring.
\end{Cor}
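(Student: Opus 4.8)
\emph{The plan} is to read off both implications from the machinery already in place, using commutativity only to simplify the hypotheses of the cited results. The ``if'' direction requires no new work: if $R$ is an artinian principal ideal ring, then K\"othe's theorem \cite{K} asserts that every $R$-module is a direct sum of cyclic modules, so $R$ is a K\"othe ring.

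For the ``only if'' direction, suppose $R$ is a commutative K\"othe ring. Then $R$ is in particular a left K\"othe ring, so Corollary~\ref{A6} gives that $R$ is left artinian and that $c_i(\mathrm{top}(M)) \le p_R(i)$ for every finitely generated indecomposable left module $M$ and every $i$. The first key step is to check that a commutative left artinian ring is basic with $p_R(i)=1$ for all $i$. Writing $R$ as a finite product of local artinian rings $R = R_1 \times \cdots \times R_n$ with primitive idempotents $e_1,\dots,e_n$, commutativity and orthogonality give $\mathrm{Hom}_R(Re_i, Re_j) \cong e_i R e_j = e_i e_j R = 0$ for $i \neq j$, so the $Re_i$ are pairwise non-isomorphic and $R \cong \bigoplus_{i=1}^{n} Re_i$. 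Substituting $p_R(i)=1$ into the inequality from Corollary~\ref{A6} yields $c_i(\mathrm{top}(M)) \le 1$ for all $i$, which says precisely that the simple summands of $\mathrm{top}(M)$ are pairwise non-isomorphic; hence $R$ is left multiplicity-free top.

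It then remains to invoke Theorem~\ref{KcK}, whose hypotheses are now met: $R$ is left artinian, and since $R$ is commutative every idempotent is central, so each primitive idempotent $e$ satisfies $Re = eR = eRe$ and is in particular left semicentral. Theorem~\ref{KcK} therefore yields that $R$ is an artinian principal right ideal ring, and commutativity identifies right ideals with two-sided ideals, so $R$ is an artinian principal ideal ring, completing the argument.

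I expect the only substantive point to be the reduction \emph{commutative left artinian $\Rightarrow$ basic with $p_R(i)=1$}, since everything else is a direct translation of commutativity into the semicentrality and one-sidedness hypotheses of the cited results. Once $p_R(i)=1$ is in hand, the K\"othe inequality of Corollary~\ref{A6} collapses exactly onto the multiplicity-free top condition that drives Theorem~\ref{KcK}, and no delicate estimates are needed.
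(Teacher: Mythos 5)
Your proposal is correct and follows exactly the route the paper intends: the ``if'' direction is K\"othe's theorem \cite{K}, and the ``only if'' direction combines Corollary~\ref{A6} with Theorem~\ref{KcK}, using commutativity to get that $R$ is basic (so $p_R(i)=1$ and the inequality becomes the multiplicity-free top condition), that primitive idempotents are left semicentral, and that right ideals are two-sided. The paper states this corollary as an immediate consequence of those results without spelling out these reductions, and your write-up supplies precisely the routine details left implicit.
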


\begin{Cor}\label{G1}{\rm(\cite[Theorem 3.1]{BGMS})}
Let $R$ be a ring in which all idempotents are central. If $R$ is a left K\"othe ring, then $R$ is an artinian principal right ideal ring.
\end{Cor}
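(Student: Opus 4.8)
The plan is to derive this from Theorem \ref{KcK}, whose hypotheses require $R$ to be left artinian, to have all primitive idempotents left semicentral, and to be left multiplicity-free top. So the whole task is to convert the single assumption ``all idempotents are central'' (together with ``left K\"othe'') into exactly these three conditions.

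First I would record two trivial but essential observations. A central idempotent $e$ satisfies $eRe = Re$, since $ere = re^2 = re$ for every $r \in R$; hence every central idempotent, in particular every primitive idempotent of $R$, is left semicentral. Moreover, being left K\"othe forces $R$ to be left artinian by Corollary \ref{A6} (or \cite[Proposition 53.6]{Wi}), so $R$ is semiperfect and we may write $R \cong \bigoplus_{i=1}^m (Re_i)^{p_R(i)}$ for a basic set $\{e_1, \dots, e_m\}$ of idempotents.

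The crucial step is to show that the central-idempotent hypothesis forces $R$ to be basic, that is $p_R(i) = 1$ for every $i$. Writing $1_R = f_1 + \cdots + f_n$ as a sum of orthogonal primitive idempotents, centrality gives $f_j R f_k = R f_j f_k = 0$ whenever $j \neq k$; since ${\rm Hom}_R(Rf_j, Rf_k) \cong f_j R f_k$, the indecomposable projectives $Rf_j$ are pairwise non-isomorphic (indeed $R \cong \prod_{j=1}^n f_j R f_j$ is a product of local rings). Therefore no indecomposable projective is repeated, every multiplicity $p_R(i)$ equals $1$, and $R$ is basic. Now Corollary \ref{A6}, applied to the left K\"othe ring $R$, gives $c_i({\rm top}(M)) \leq p_R(i) = 1$ for every finitely generated indecomposable left $R$-module $M$ and every $i$; equivalently, the composition factors of ${\rm top}(M)$ are pairwise non-isomorphic, so $R$ is a left multiplicity-free top ring.

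At this point all three hypotheses of Theorem \ref{KcK} are in force: $R$ is left artinian, its primitive idempotents are left semicentral, and it is left multiplicity-free top. Theorem \ref{KcK} then delivers that $R$ is an artinian principal right ideal ring, as required. I expect the only genuine work to lie in the ``basic'' step, namely checking that centrality of all idempotents precludes isomorphic indecomposable projective summands; the remaining implications are direct invocations of the results established above.
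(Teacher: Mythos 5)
Your proposal is correct and matches the paper's intended derivation: the paper states this corollary without a written proof, simply as a consequence of Corollary \ref{A6} and Theorem \ref{KcK}, and your argument fills in exactly those details. In particular, your key observation---that centrality of all idempotents kills $f_jRf_k \cong \mathrm{Hom}_R(Rf_j,Rf_k)$ for $j \neq k$, forcing $R$ to be basic so that $p_R(i)=1$ and Corollary \ref{A6} yields the multiplicity-free top condition needed for Theorem \ref{KcK}---is precisely the bridge the paper leaves implicit.
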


The following example shows that there exists an artinian left multiplicity-free top local ring $R$ which is not principal left ideal ring.

\begin{Examp}\label{E1} {\rm
Let $F$ be a field which is isomorphic to the its proper subfield $\overline{F}$ such that ${\rm dim}(_{\overline{F}}F) = 2$ (for example let $F={\Bbb{Z}}_2(y)$, where ${\Bbb{Z}}_2(y)$ is the quotient field of polynomial ring ${\Bbb{Z}}_2[y]$ and let $\overline{F}={\Bbb{Z}}_2(y^2)$). Let $\alpha$ be the isomorphism from $F$ to $\overline{F}$ and $F[x; \alpha]$ be a skew polynomial ring with a usual polynomial addition and multiplication given by $\lambda x =x \alpha (\lambda)$ for each $\lambda \in F$.  Set $R:=F[x; \alpha] /{< x^2 >}$. Then $R$ is a local ring. Let ${\mathcal{M}}$ be the maximal ideal of $R$ and $\lbrace 1, a\rbrace$  be a basis for the vector space $F$ over $\overline{F}$. Therefore ${\mathcal{M}} =xR=Rx \oplus Rxa$ and ${\mathcal{M}}^2=0$. Set $Q=R/{\mathcal{M}}$. Consequently, ${\rm dim}(_Q{\mathcal{M}})=2$ and ${\rm dim}({\mathcal{M}}_Q) =1$. Hence by \cite[Proposition 3]{Ringel}, $R$ is an artinian left multiplicity-free top ring but it is not principal left ideal ring.}
\end{Examp}

The following example shows that the converse of Theorem \ref{KcK} and Corollary \ref{G1} are not true in general.

\begin{Examp}\label{E2}{\rm
Let $H$ be a division ring which is isomorphic to the its proper subdivision ring $\overline{H}$ such that  ${\rm dim}(_{\overline{H}}H) = 3$ (see \cite[Theorem]{Xue}). Let $\alpha$ be the isomorphism from $H$ to $\overline{H}$ and $H[x; \alpha]$  be a skew polynomial ring.  Set $R:=H[x; \alpha] /{< x^2 >}$.  Then $R$ is a local ring. Let ${\mathcal{M}}$ be the maximal ideal of $R$ and $\lbrace 1, a, b \rbrace$ be a basis for the vector space $H$ over $\overline{H}$. Then ${\mathcal{M}} =xR=Rx \oplus Rxa \oplus Rxb$ and ${\mathcal{M}}^2=0$. Thus  by \cite[Theorem 9]{Facc}, $R$ is a right uniserial ring and  ${\rm dim}(_Q{\mathcal{M}})=3$ and ${\rm dim}({\mathcal{M}}_Q) =1$, where $Q=R/{\mathcal{M}}$. It follows that  $\ell(R_R)=2$. Let $u, v, w$ be the linearly independent elements of $_Q{\mathcal{M}}$. The similar argument as in the proof of \cite[Lemma 3.1]{Ringel1} shows that  $T= (R \oplus R \oplus R)/D$, where $D = \lbrace (u\lambda, v\lambda, w\lambda)~|~ \lambda\in R \rbrace$ is an indecomposable right $R$-module with $\ell({\rm soc}(T_R))=2$. Therefore by \cite[Theorem B]{Sumioka},  $R$ is not of left local type. Hence by Proposition \ref{KCK}, $R$ is not a left multiplicity-free top ring but $R$ is an artinian principal right ideal ring. }
\end{Examp}

\section{A characterization of representation-finite rings}
Let ${\mathcal{U}}$ be a class of left $R$-modules and  $M$ be  a left $R$-module. Then $Tr({\mathcal{U}}, M) = \sum  \lbrace Im(h) ~ |~ h\in {\rm Hom}_R(U, M),~ U \in {\mathcal{U}}\rbrace $
is called  {\it trace of ${\mathcal{U}}$ in $M$}. An idempotent $e$ of $R$ is called {\it full idempotent} if $ReR=R$. We recall that for a full idempotent $e \in R$,  ${\rm Tr}(Re,R) = ReR=R$ and so $Re$ is a generator in $R-$Mod (see \cite[Exercise 13.10(1)]{Wi}).

\begin{The}\label{A9}
Let  $S$  be a basic ring and  $k \in {\Bbb{N}}$. Then  the following conditions are equivalent.
\begin{itemize}
\item[$(1)$] $S$ is a left $k$-cyclic ring.
\item[$(2)$] Any ring Morita equivalent to $S$ is left $k$-cyclic.
\item[$(3)$] Any ring $R$ which is Morita equivalent to $S$ is artinian and  for each indecomposable left $R$-module $M$,  $c_i({\rm top}(M))  \leq k$ for each $1 \leq i \leq m$, where $\lbrace e_1, \cdots, e_m\rbrace$ is a basic set of idempotents of $R$.
\item[$(4)$] For each full idempotent $e \in S$, $eSe$ is a  left $k$-cyclic ring.
\item[$(5)$] There exists a full idempotent $e \in S$ such that $eSe$ is a left $k$-cyclic ring.
\end{itemize}
\end{The}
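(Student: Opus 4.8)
The plan is to convert all five conditions into the numerical criterion of Theorem~\ref{A7}. Since $S$ is basic we have $p_S(i)=1$ for every $i$, so Theorem~\ref{A7} says that $S$ is left $k$-cyclic precisely when $S$ is representation-finite and $q_S(i)\le k$ for all $i$. Two standard Morita facts drive the argument: representation-finiteness is invariant under Morita equivalence, and for any ring $R\approx S$ Proposition~\ref{A5} gives both that the basic sets of $R$ and $S$ have the same cardinality and that $q_R(i)=q_S(i)$ for each $i$. I would set these up first.

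For the cycle $(1)\Rightarrow(2)\Rightarrow(3)\Rightarrow(1)$ I would argue as follows. Assuming $(1)$, the ring $S$ is representation-finite, hence so is any $R\approx S$, and then $q_R(i)=q_S(i)\le k\le k\,p_R(i)$ because $p_R(i)\ge 1$; by Theorem~\ref{A7} this makes $R$ left $k$-cyclic, which is $(2)$. For $(2)\Rightarrow(3)$, any $R\approx S$ is left $k$-cyclic, hence left artinian and representation-finite by Theorem~\ref{A7}, so every indecomposable left $R$-module $M$ is finitely generated and satisfies $c_i({\rm top}(M))\le q_R(i)=q_S(i)\le k$. For $(3)\Rightarrow(1)$ I would simply apply $(3)$ to $R=S$: then $S$ is artinian with $q_S(i)\le k=k\,p_S(i)$, so Theorem~\ref{A7} returns $(1)$.

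The full-idempotent conditions rest on the observation that, for a full idempotent $e\in S$, the left module $Se$ is a finitely generated projective generator, hence a progenerator, so $eSe\approx S$. Granting this, $(1)\Rightarrow(4)$ is the computation of $(1)\Rightarrow(2)$ applied to $R=eSe$, and $(4)\Rightarrow(5)$ is immediate since $1_S$ is a full idempotent.

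The crux is $(5)\Rightarrow(1)$, and this is where a genuinely non-formal point enters: $k$-cyclicity is \emph{not} Morita-invariant in general (passing to a non-basic ring only enlarges the multiplicities $p(i)$ and so weakens the criterion), so I cannot transport $k$-cyclicity from $eSe$ back to $S$ for free. The key lemma I would prove is that $eSe$ is again basic. Indeed, writing $S=\bigoplus_{i=1}^{r}Se_i$ with the $Se_i$ pairwise non-isomorphic, the summand $Se$ of $S$ contains each indecomposable projective with multiplicity at most one by Krull--Schmidt, while fullness of $e$ (that is, $Se$ being a generator) forces each to appear; hence every multiplicity is exactly one and $p_{eSe}(i)=1$. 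Now $eSe$ left $k$-cyclic gives, via Theorem~\ref{A7}, that $eSe$ is representation-finite with $q_{eSe}(i)\le k\,p_{eSe}(i)=k$; since $eSe\approx S$, representation-finiteness passes to $S$ and Proposition~\ref{A5} gives $q_S(i)=q_{eSe}(i)\le k=k\,p_S(i)$, so Theorem~\ref{A7} yields $(1)$. I expect this basicness of $eSe$ (equivalently, that a full idempotent of a basic ring is conjugate to $1_S$) to be the main obstacle, the rest being bookkeeping with Theorem~\ref{A7} and Proposition~\ref{A5}.
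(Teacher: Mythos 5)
Your proof is correct, and on the cycle $(1)\Rightarrow(2)\Rightarrow(3)\Rightarrow(1)$ and on $(1)\Rightarrow(4)\Rightarrow(5)$ it is essentially the paper's own argument: the same reduction to the numerical criterion of Theorem~\ref{A7} transported by Proposition~\ref{A5}, and the same use of $Se$ being a progenerator to get $S\approx eSe$ (your shortcut for $(3)\Rightarrow(1)$, taking $R=S$, replaces the paper's $(3)\Rightarrow(2)\Rightarrow(1)$ and is only a cosmetic difference). The genuine divergence is $(5)\Rightarrow(1)$, where you rightly note that $k$-cyclicity cannot be transported backwards along a Morita equivalence for free. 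The paper avoids your key lemma entirely: it runs the concrete equivalence $Se\otimes_{eSe}-$, writes a finitely generated indecomposable $Y$ as $Se\otimes_{eSe}X$, applies the functor to an epimorphism $(eSe)^k\rightarrow X$ to get an epimorphism $(Se)^k\rightarrow Y$, and then uses that $Se$ is a direct summand of $_SS$ to obtain an epimorphism $S^k\rightarrow Y$; representation-finiteness, transported from $eSe$, finishes. You instead prove that $eSe$ is basic, via Krull--Schmidt applied to the finitely generated projective generator $Se$ sitting as a direct summand inside the multiplicity-free module $_SS$, and then recycle Theorem~\ref{A7} and Proposition~\ref{A5}; this is valid, but note that your lemma gives much more than you state: since each $Se_i$ must occur in $Se$ with multiplicity exactly one while $_SS=Se\oplus S(1-e)$ contains each $Se_i$ exactly once, the complement $S(1-e)$ has no indecomposable summands at all, so $S(1-e)=0$ and $e=1_S$ (which is what your parenthetical amounts to, since an idempotent conjugate to $1_S$ equals $1_S$). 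Thus in a basic semiperfect ring the only full idempotent is the identity, and conditions $(4)$ and $(5)$ collapse tautologically to $(1)$. That is the trade-off between the two routes: yours exposes this structural fact, whereas the paper's functorial argument never uses basicness of $S$ in $(5)\Rightarrow(1)$ and hence actually proves the stronger assertion that for an arbitrary ring $S$ and any full idempotent $e$, left $k$-cyclicity of $eSe$ implies left $k$-cyclicity of $S$.
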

\begin{proof}
Let $S$  be a basic ring. Then  $S = \bigoplus_{j=1}^tSf_j$, where $t \in {\Bbb{N}}$ and $\lbrace f_1, \cdots, f_t \rbrace$ is a basic set of idempotents of $S$.\\
$(1) \Rightarrow (2).$ Assume that $S$ is a left $k$-cyclic ring. Then by \cite[Proposition 53.6]{Wi}, $S$ is left artinian and so  there is a finite upper bound for the lengths of finitely generated indecomposable modules in $S$-Mod. Thus by \cite[Proposition 54.3]{Wi}, $S$ is a representation-finite ring. Let $R$ be Morita equivalent to $S$. Then $R$ is a representation-finite ring and so  ${R \cong \bigoplus_{i=1}^s {(Re_i)}^{p_R(i)}}$, where $s\in {\Bbb{N}}$,   each $p_R(i) \in {\Bbb{N}}$ and $\lbrace e_1, \cdots, e_s \rbrace$ is a basic set of idempotents of $R$. Therefore  by Proposition \ref{A5},  $t=s$ and $q_R(j) = q_S(j)$ for each $1 \leq j \leq s$.  Since $S$ is a basic left $k$-cyclic ring, by Theorem \ref{A7}, $q_S(j) \leq k$ for each $1 \leq j \leq s$. Hence by Theorem \ref{A7}, $R$ is a left $k$-cyclic ring.\\
$(2) \Rightarrow (3).$ Assume that any ring Morita equivalent to $S$ is left $k$-cyclic. Let  $R$ be Morita equivalent to $S$. Then $R$ is left $k$-cyclic. By \cite[Proposition 53.6]{Wi}, $R$ is left artinian and so  there is a finite upper bound for the lengths of finitely generated indecomposable modules in $R$-Mod. Thus by \cite[Proposition 54.3]{Wi}, $R$ is a representation-finite ring. Consequently, by \cite[Proposition 54.3]{Wi},  $R$ is an artinian ring. It follows that ${R\cong \bigoplus_{i=1}^m {(Re_i)}^{p_R(i)}}$, where $m \in {\Bbb{N}}$, each $p_R(i) \in {\Bbb{N}}$ and $\lbrace e_1, \cdots, e_m\rbrace$ is a basic set of idempotents of $R$. So by Theorem \ref{A5}, $t=m$ and  $q_R(i) =q_S(i)$ for each $1 \leq i \leq t$. Hence by Theorem \ref{A7}, $q_R(i) \leq k$ for each $1 \leq i \leq t$. Let $M$ be an indecomposable left $R$-module. Then $M$ is a $k$-generated module. Therefore $c_i({\rm top}(M)) \leq k$ for each $1 \leq i \leq t$.\\
 $(3) \Rightarrow (2).$  It follows from Theorem \ref{A7}.\\
 $(2) \Rightarrow (1)$ is clear.\\
$(1) \Rightarrow (4).$ Assume that $S$ is a left $k$-cyclic ring and $e$ is a full idempotent of $S$. Then $Se$ is a generator.  Hence by    \cite[Corollary 22.4]{Anderson}, $S \approx eSe$. Therefore by (2),  $eSe$ is a left $k$-cyclic ring.\\
$(4) \Rightarrow (5)$ is clear.\\
$(5) \Rightarrow (1).$ Assume that there exists a full idempotent $ 1_S \neq e \in S$ such that $eSe$ is a left $k$-cyclic ring. Then  $Se$ is a progenerator and by \cite[Propositions 53.6 and 54.3]{Wi}, $eSe$ is a representation-finite ring.  Hence by \cite[Corollary 22.4]{Anderson}, $eSe$  is Morita equivalent to $S$ via an equivalence $Se \otimes_{eSe} -: eSe{\rm -Mod} \rightarrow S{\rm -Mod}$. It follows that $S$ is a representation-finite ring. It is sufficient to show that every finitely generated indecomposable left $S$-module is $k$-generated. Let $Y$ be a finitely generated indecomposable left $S$-module. Then there exists a finitely generated indecomposable left $eSe$-module $X$ such that $Y\cong Se \otimes_{eSe} X$. Since $eSe$ is left $k$-cyclic,  there exists an epimorphism ${(eSe)}^k \rightarrow X$. Thus there exists an epimorphism $Se \otimes_{eSe} {(eSe)}^k \rightarrow Se \otimes_{eSe} X$. Since $Se \otimes_{eSe} eSe \cong Se$ as left $S$-module and $e$ is idempotent, there exists an epimorphism $S^k \rightarrow Se \otimes_{eSe} X$. Therefore $Y$ is a $k$-generated left $S$-module.
\end{proof}

\begin{Cor}\label{A8}
The following conditions are equivalent for a basic ring $S$.
\begin{itemize}
\item[$(1)$] $S$ is a left K\"othe ring.
\item[$(2)$] Any ring Morita equivalent to $S$ is left K\"othe.
\item[$(3)$] Any ring Morita equivalent to $S$ is an artinian left multiplicity-free top.
\item[$(4)$] For each full idempotent $ e \in S$, $eSe$ is a left K\"othe ring.
\item[$(5)$] There exists a full idempotent $e \in S$ such that $eSe$ is a left K\"othe ring.
\end{itemize}
\end{Cor}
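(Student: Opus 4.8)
The plan is to obtain Corollary \ref{A8} as the specialization of Theorem \ref{A9} to $k = 1$. The first step is to record that for a ring $R$ the notions ``left K\"othe'' and ``left $1$-cyclic'' coincide. Indeed, a left $1$-cyclic ring is by definition one in which every left module is a direct sum of indecomposable cyclic modules, hence in particular a direct sum of cyclic modules, so it is left K\"othe. Conversely, if $R$ is left K\"othe then, as recalled in the introduction, $R$ is left artinian and representation-finite, so by \cite[Propositions 53.6 and 54.3]{Wi} every left $R$-module is a direct sum of finitely generated indecomposable modules; each such indecomposable summand $N$, being itself a direct sum of cyclic modules while being indecomposable, must be a single cyclic module. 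Thus every left module is a direct sum of indecomposable cyclic modules and $R$ is left $1$-cyclic.

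Granting this identification, the equivalences $(1) \Leftrightarrow (2) \Leftrightarrow (4) \Leftrightarrow (5)$ of Corollary \ref{A8} are exactly the corresponding equivalences of Theorem \ref{A9} read at $k = 1$, since conditions $(1)$, $(2)$, $(4)$, $(5)$ there are the $k = 1$ instances of the present ones. It therefore remains only to match condition $(3)$. In Theorem \ref{A9}$(3)$ with $k = 1$, the requirement is that every ring $R$ Morita equivalent to $S$ be artinian and satisfy $c_i({\rm top}(M)) \leq 1$ for each basic idempotent $e_i$ and each indecomposable left $R$-module $M$. Because such an $R$ is representation-finite, its indecomposable left modules are finitely generated, and since ${\rm top}(M)$ is semisimple the inequality $c_i({\rm top}(M)) \leq 1$ for all $i$ says precisely that the composition factors of ${\rm top}(M)$ are pairwise non-isomorphic, i.e. that $M$ is multiplicity-free top. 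Hence Theorem \ref{A9}$(3)$ at $k = 1$ is exactly the assertion that every ring Morita equivalent to $S$ is an artinian left multiplicity-free top ring, which is Corollary \ref{A8}$(3)$.

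I expect the only point requiring genuine care to be the equivalence of ``left K\"othe'' with ``left $1$-cyclic'': it rests on the known fact that a left K\"othe ring is representation-finite, which supplies the decomposition of arbitrary modules into finitely generated indecomposables and lets one upgrade ``direct sum of cyclics'' to ``direct sum of indecomposable cyclics''. Once this is in place, the remainder is a direct transcription of Theorem \ref{A9} at $k = 1$, together with the routine observation that the numerical bound $c_i({\rm top}(M)) \leq 1$ is the multiplicity-free top condition; no further argument is needed.
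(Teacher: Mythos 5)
Your proposal is correct and takes essentially the same route as the paper: the paper offers no separate argument for Corollary \ref{A8}, presenting it as the immediate $k=1$ specialization of Theorem \ref{A9}. Your write-up merely makes explicit the two routine identifications the paper leaves implicit --- that ``left K\"othe'' coincides with ``left $1$-cyclic'' (via representation-finiteness of left K\"othe rings and indecomposability forcing each summand to be a single cyclic module), and that the bound $c_i({\rm top}(M)) \leq 1$ for all $i$ is precisely the left multiplicity-free top condition.
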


 The following example shows that there exists a basic ring $S$ and an idempotent $e$ of $S$ such that $eSe$ is left K\"othe but $S$ is not a left K\"othe ring.

\begin{Examp}{\rm
Let $R$ be a simple artinian ring and $A$ be a basic finite dimensional algebra which is not left K\"othe and $R$ is not isomorphic to each direct summand of $A$. Then $S= A \oplus R$ is a basic ring which is not left K\"othe but $(0, 1)S(0, 1)$ is a left K\"othe ring.}
 \end{Examp}

\begin{Cor}\label{B3}
Let $R$ be a left K\"othe ring. Then there exists a positive integer $k$ such that every ring Morita equivalent to $R$ is left $k$-cyclic.
\end{Cor}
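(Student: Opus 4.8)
The plan is to reduce to the basic ring Morita equivalent to $R$ and then quote Theorem \ref{A9}, with all the real work being the correct choice of $k$. First I would record that a left K\"othe ring is exactly a left $1$-cyclic ring: a cyclic module is $1$-generated, and conversely a direct summand of a cyclic module is again cyclic, so over the (necessarily artinian) ring $R$ a decomposition into cyclic modules refines to a decomposition into indecomposable $1$-generated modules. Hence by Corollary \ref{A6} (equivalently, Theorem \ref{A7} with $k=1$) the hypothesis forces $R$ to be representation-finite, and in particular left artinian and semiperfect. There is therefore a basic set of idempotents of $R$, and taking the associated basic idempotent $e$, the ring $S := eRe$ is basic and Morita equivalent to $R$.

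Next I would fix the integer. Let $m$ be the cardinality of a basic set of idempotents of $S$ and set $k := \max\{q_S(1), \ldots, q_S(m)\}$; this is a finite positive integer because $S$, being Morita equivalent to the representation-finite ring $R$, is itself representation-finite. Since $S$ is basic, in the decomposition $S \cong \bigoplus_{i=1}^m (Sf_i)^{p_S(i)}$ every multiplicity $p_S(i)$ equals $1$. Consequently $q_S(i) \le k = k\,p_S(i)$ for each $1 \le i \le m$, and the implication $(3) \Rightarrow (1)$ of Theorem \ref{A7} shows that $S$ is a left $k$-cyclic ring.

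Finally I would transport this over the whole Morita class. Since $S$ is a \emph{basic} left $k$-cyclic ring, the implication $(1) \Rightarrow (2)$ of Theorem \ref{A9} gives that every ring Morita equivalent to $S$ is left $k$-cyclic. As $R$ is Morita equivalent to $S$ and Morita equivalence is an equivalence relation, every ring Morita equivalent to $R$ is Morita equivalent to $S$, hence left $k$-cyclic, which is the assertion.

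Almost all of the content sits in the prior results, so the only genuine point is the choice of $k$, and I expect the conceptual obstacle to be recognizing that $k=1$ cannot be used in general: being left $k$-cyclic is not a Morita invariant for a fixed $k$, since the multiplicities $p(i)$ vary across the Morita class while, by Proposition \ref{A5}, the invariants $q(i)$ are preserved. Passing to the basic representative $S$ makes all $p_S(i)=1$, so the $k$-cyclic criterion $q_S(i) \le k\,p_S(i)$ collapses to $q_S(i) \le k$, and choosing $k=\max_i q_S(i)$ (equivalently $\max_i q_R(i)$) is precisely what renders the bound uniform over the entire Morita class.
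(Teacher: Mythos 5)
Your proof is correct, and it shares the paper's overall skeleton (pass to the basic ring $S=eRe$, show $S$ is left $k$-cyclic, finish with Theorem \ref{A9}(1)$\Rightarrow$(2)), but the key step and the constant are genuinely different. The paper takes $k=\max_i p_R(i)$, the largest multiplicity in $R\cong\bigoplus_{i=1}^m(Re_i)^{p_R(i)}$, and proves by hand that $S$ is left $k$-cyclic: it uses the K\"othe hypothesis to get an epimorphism $R\to X$ onto each finitely generated indecomposable left $R$-module $X$, applies the equivalence $P\otimes_R-$ with $P=eR$, and then shows via the computation $_S{S^{(k)}}\cong{\rm Hom}_R(P^{(k)},P)\cong P\oplus{\rm Hom}_R(R',P)$ (where $P^{(k)}\cong R\oplus R'$, which is exactly where $k\ge p_R(i)$ is needed) that every finitely generated indecomposable left $S$-module is a quotient of a direct summand of $S^{(k)}$. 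You instead take $k=\max_i q_S(i)$ and get left $k$-cyclicity of $S$ for free from the numerical criterion of Theorem \ref{A7}(3), since $p_S(i)=1$ for a basic ring. Both constants work, and yours is in fact at most the paper's, since $q_S(i)=q_R(i)\le p_R(i)$ by Proposition \ref{A5} and Corollary \ref{A6}(3). What the paper's route buys is an explicit bound computable from the decomposition of $R$ alone, with no need to enumerate the indecomposable modules, and an argument that exploits the cyclicity of indecomposables directly; what your route buys is brevity and the conceptual point that the K\"othe hypothesis enters only through representation-finiteness (it merely improves the bound), which is consistent with the paper's Example \ref{B2} showing that the converse of Corollary \ref{B3} fails. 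The one statement you leave to standard theory --- that $eRe$ is basic and Morita equivalent to $R$ when $e$ is the basic idempotent of a semiperfect ring --- is precisely what the paper's generator argument establishes, so relying on it is legitimate and leaves no gap.
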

\begin{proof}
Assume that $R$ is a left K\"othe ring. Then $R$ is left artinian and so \\ ${R\cong \bigoplus_{i=1}^m {(Re_i)}^{p_R(i)}}$,  where $m \in {\Bbb{N}}$, each $p_R(i) \in {\Bbb{N}}$ and $\lbrace e_1, \cdots, e_m\rbrace$ is a basic set of idempotents of $R$. Set $e = e_1 + \cdots + e_m$, $S= eRe$ and $k= {\rm max}\lbrace p_R(i)~|~ 1 \leq i \leq m\rbrace$.  Thus  $S$ is a basic ring and by \cite[Proposition 27.10]{Anderson},  $P=eR$ generate all simple right $R$-modules. So by \cite[Proposition 17.9]{Anderson}, $P$ is a generator in Mod-$R$. It follows that there is a right $R$-module $R'$ such that $P^{(k)} \cong R \oplus R'$ and also by \cite[Corollaries 22.4 and 22.5]{Anderson}, $R \approx S$ via an equivalence $P\otimes_R -:R{\rm -Mod} \rightarrow S{\rm -Mod}$. Let $Y$ be a finitely generated indecomposable left $S$-module. Then there exists a finitely generated indecomposable left $R$-module $X$ such that $Y\cong P\otimes_R X$. Since $R$ is left K\"othe,  there exists an epimorphism $R \rightarrow X$ and so there exists an epimorphism $P\otimes_R R \rightarrow Y$. On the other hand, by \cite[Proposition 11.10]{Wi} and \cite[Proposition 4.5]{Anderson}, we have $S$-isomorphisms
\begin{center}
$_S{S^{(k)}} \cong {{\rm Hom}_R(P, P)}^{(k)} \cong {\rm Hom}_R(P^{(k)}, P) \cong {\rm Hom}_R(R \oplus R', P) \cong {\rm Hom}_R(R, P) \oplus {\rm Hom}_R(R', P) \cong P \oplus {\rm Hom}_R(R', P)$.
\end{center}
Consequently, there exists an epimorphism $S^{(k)} \rightarrow Y$. Therefore by  \cite[Propositions 53.6 and 54.3]{Wi},  $S$ is a left  $k$-cyclic ring. Thus by Theorem \ref{A9},  every ring Morita equivalent to $R$ is left $k$-cyclic.
\end{proof}

\begin{Rem}{\rm
Let ${R\cong \bigoplus_{i=1}^m {(Re_i)}^{p_R(i)}}$,  where $m \in {\Bbb{N}}$, each $p_R(i) \in {\Bbb{N}}$ and $\lbrace e_1, \cdots, e_m\rbrace$ is a basic set of idempotents of $R$. Let $k$ be a positive integer such that $k \leq p_R(i)$ for each $1 \leq i \leq m$. Assume that any ring which is Morita equivalent to $R$ is left $k$-cyclic. Then by Theorem \ref{A9}, $R$ is artinian and  for each indecomposable left $R$-module $M$,  $c_i({\rm top}(M))  \leq k$ for each $1 \leq i \leq m$. Therefore by Corollary \ref{A6}, $R$ is a left K\"othe ring. In fact, if  $k \leq p_R(i)$ for each $1 \leq i \leq m$, then the converse of Corollary \ref{B3} is true.
}
\end{Rem}

The following example shows that the converse of Corollary \ref{B3} is not true in general.

\begin{Examp}\label{B2}{\rm
Let $Q$ be the quiver
\begin{displaymath}
{\small \xymatrix{
 &  \overset{2}{\bullet}  \ar@{<-}[dr]      \\
\overset{3}{\bullet}  \ar@{<-}[rr] && \overset{1}{\bullet} \\
&  \overset{4}{\bullet}  \ar@{<-}[ur]
 }}
\end{displaymath}
and $A=KQ$ be the path algebra of $Q$ over an algebraically closed field $K$. We identify $A{\rm -mod} \approx{\rm rep}_K(Q)$.  Clearly $A$ is a basic representation-finite $K$-algebra. Let $M$ be the representation
\begin{displaymath}
{\small \xymatrix{
 &  K  \ar@{<-}[dr]^{[1~ 0]}      \\
K   \ar@{<-}[rr]_{[0 ~ 1]} && K^2 \\
&  K  \ar@{<-}[ur]_{[1 ~ 1]}
 }}
\end{displaymath}
Then $M$ is a finitely generated indecomposable left $A$-module and it is easy to see that $c_1({\rm top}(M)) = 2$. Thus by Corollary \ref{A6}, $A$ is not left K\"othe. By using Theorem \ref{A7}, it is easy to see that $A$ is a left 2-cyclic ring. Therefore by Theorem \ref{A9}, every ring Morita equivalent to $A$ is left 2-cyclic.
}
\end{Examp}

It is known that the class of left K\"othe rings is a proper subclass of the class of representation-finite rings. In the following, we show that the class of representation-finite rings and the class of rings which are Morita equivalent to the left K\"othe rings are coincide.

\begin{Pro}\label{B4}
The following conditions are equivalent for a ring $R$.
\begin{itemize}
\item[$(1)$] $R$ is a representation-finite ring.
\item[$(2)$] There exists a basic ring $S$ and a positive integer $n$ such that ${\rm Mat}_n(S)$ is a left K\"othe ring and $R \approx {\rm Mat}_n(S)$.
\end{itemize}
\end{Pro}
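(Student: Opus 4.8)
The plan is to prove the two implications separately, treating $(2)\Rightarrow(1)$ as routine and concentrating the effort on $(1)\Rightarrow(2)$. For $(2)\Rightarrow(1)$ I would argue that if ${\rm Mat}_n(S)$ is left K\"othe, then by Corollary \ref{A6} it is representation-finite; and since $R\approx {\rm Mat}_n(S)$ while representation-finiteness coincides with being left and right pure semisimple, a property intrinsic to the module category, $R$ is representation-finite as well. Equivalently, one may invoke that a Morita equivalence induces a bijection on isomorphism classes of (indecomposable) modules and preserves left artinianness, exactly as already exploited in Proposition \ref{A5}.

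For $(1)\Rightarrow(2)$, the starting point is that a representation-finite ring is left artinian, hence semiperfect, so there is a basic idempotent $e\in R$ with $S:=eRe$ a basic ring and $R\approx S$ by \cite[Corollary 22.4]{Anderson}; moreover $S$ is again representation-finite. Since ${\rm Mat}_n(S)\approx S\approx R$ for \emph{every} $n$, the Morita requirement in $(2)$ holds automatically, and the whole task reduces to choosing $n$ so that ${\rm Mat}_n(S)$ is left K\"othe. Writing $S=\bigoplus_{j=1}^t Sf_j$ with $\{f_1,\dots,f_t\}$ a basic set (so $p_S(j)=1$ for all $j$), the key computation is that the diagonal idempotents $g_j=\mathrm{diag}(f_j,0,\dots,0)$ form a basic set of $t$ idempotents of ${\rm Mat}_n(S)$, that ${\rm Mat}_n(S)g_j$ is isomorphic to the image of $Sf_j$ under the standard equivalence, and that splitting $_{{\rm Mat}_n(S)}{\rm Mat}_n(S)$ into its $n$ columns yields $p_{{\rm Mat}_n(S)}(j)=n$ for every $j$.

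With these structural facts in hand I would apply Corollary \ref{A6}(3): ${\rm Mat}_n(S)$ is left K\"othe precisely when it is representation-finite and $q_{{\rm Mat}_n(S)}(j)\le p_{{\rm Mat}_n(S)}(j)=n$ for all $j$. Because ${\rm Mat}_n(S)\approx S$ are both representation-finite, Proposition \ref{A5} gives $q_{{\rm Mat}_n(S)}(j)=q_S(j)$, a number independent of $n$ and finite since $S$ has only finitely many indecomposables, with $q_S(j)\ge 1$ as the projective $Sf_j$ already contributes $c_j({\rm top}(Sf_j))=1$. Taking $n:=\max_{1\le j\le t} q_S(j)\ge 1$ then forces $q_{{\rm Mat}_n(S)}(j)=q_S(j)\le n=p_{{\rm Mat}_n(S)}(j)$ for all $j$, so ${\rm Mat}_n(S)$ is left K\"othe and $R\approx {\rm Mat}_n(S)$, which is $(2)$. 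The main obstacle is the multiplicity identity $p_{{\rm Mat}_n(S)}(j)=n$ together with the bookkeeping that the basic idempotents of ${\rm Mat}_n(S)$ correspond under the equivalence to those of $S$, so that the indices of Proposition \ref{A5} really match; once that is settled, the conceptual heart of the argument is transparent, namely that passing to ${\rm Mat}_n(S)$ inflates every projective multiplicity $p(j)$ up to $n$ while leaving the Morita-invariant top-multiplicity $q(j)$ untouched, and hence makes the K\"othe inequality hold for $n$ large.
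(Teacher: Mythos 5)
Your proof is correct and takes essentially the same route as the paper's: replace $R$ by a Morita-equivalent basic ring $S$, pass to ${\rm Mat}_n(S)$ so that every projective multiplicity $p(j)$ becomes $n$ while the invariants $q_S(j)$ are preserved by Proposition \ref{A5}, and then apply the criterion of Corollary \ref{A6}. The only difference is your choice $n=\max_{1\le j\le t} q_S(j)$ versus the paper's $n=q_S(1)+\cdots+q_S(t)$; both satisfy the needed inequality $q_S(j)\le n$, so this is an inessential variation.
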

\begin{proof}
$(1) \Rightarrow (2).$ Assume that $R$ is a representation-finite ring. Then there exists a basic ring $S$ such that $R \approx S$. Let $S = \bigoplus_{j=1}^rSf_j$, where $r \in {\Bbb{N}}$ and $\lbrace f_1, \cdots, f_r \rbrace$ is a basic set of idempotents of $S$. Set $d=q_S(1) + \cdots + q_S(r)$ and $T = {\rm Mat}_d(S)$. Then by \cite[Corollary 22.6]{Anderson}, $R \approx T$ and so $T$ is a representation-finite ring. It follows that  $T \cong \bigoplus_{k=1}^s{(Th_k)}^{p_{T}(k)}$, where $s \in {\Bbb{N}}$, each $p_{T}(k) \in {\Bbb{N}}$ and  $\lbrace h_1, \cdots , h_s \rbrace$ is a basic set of idempotents of $T$. Since $S$ is basic, $p_{T}(j) = d$ for each $1 \leq j \leq s$. On the other hand,  by Proposition \ref{A5},  $r = s$  and   $q_S(j) = q_{T}(j)$ for each $1 \leq j \leq r$. Consequently, $q_{T}(j) = q_S(j) \leq d = p_{T}(j)$ for each $1 \leq j \leq r$. Therefore by Corollary \ref{A6}, $T$ is a left K\"othe ring.\\
$(2) \Rightarrow (1).$ It follows from \cite[Propositions 53.6 and  54.3]{Wi}.
\end{proof}

\begin{Rem}{\rm
Let $R$ be a representation-finite ring which is not left K\"othe. Then there exists a basic ring $S$ and $n \in {\Bbb{N}}$ such that ${\rm Mat}_n(S)$ is a left K\"othe ring and $R \approx {\rm Mat}_n(S)$. In fact left K\"othe property is not a Morita invariant property.}
\end{Rem}

\begin{Pro}\label{B6}
Let $R$ be a ring and $n \in {\Bbb{N}}$. Then the following conditions are equivalent.
\begin{itemize}
\item[$(1)$] ${\rm{Mat}}_n(R)$ is a left k-cyclic ring.
\item[$(2)$] $R$ is a left $kn$-cyclic  ring.
\item[$(3)$] For each $m \geq n$, ${\rm{Mat}}_m(R)$ is a left k-cyclic ring.
\end{itemize}
\end{Pro}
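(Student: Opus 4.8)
The plan is to reduce everything to the characterization of left $k$-cyclic rings in Theorem \ref{A7}, fed by the Morita-invariance data of Proposition \ref{A5}. The whole proposition is driven by a single claim, which I would isolate and prove first.

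\smallskip
\emph{Claim.} For every $\ell \in \mathbb{N}$, the ring ${\rm Mat}_\ell(R)$ is left $k$-cyclic if and only if $R$ is left $k\ell$-cyclic.
\smallskip

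To prove the Claim I would first record that $R$ and $S:={\rm Mat}_\ell(R)$ are Morita equivalent by \cite[Corollary 22.6]{Anderson}, so $R$ is representation-finite precisely when $S$ is. If neither is representation-finite, then by Theorem \ref{A7} neither $S$ is left $k$-cyclic nor $R$ is left $k\ell$-cyclic, so both sides fail and the equivalence holds vacuously; hence assume both are representation-finite. Fix an equivalence $F: R{\rm -Mod} \to S{\rm -Mod}$ and a basic set $\lbrace e_1,\dots,e_m\rbrace$ of idempotents of $R$ with $R\cong \bigoplus_{i=1}^m (Re_i)^{p_R(i)}$. As in Proposition \ref{A5}, the modules $F(Re_1),\dots,F(Re_m)$ are, up to isomorphism, the non-isomorphic indecomposable projective left $S$-modules, say $F(Re_i)\cong Sg_i$ for a basic set $\lbrace g_1,\dots,g_m\rbrace$ of idempotents of $S$; with this matching of indices Proposition \ref{A5} gives $q_R(i)=q_S(i)$ for all $i$.

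The key step, and the one I expect to require the most care, is the computation of the projective multiplicities $p_S(i)$. The standard equivalence realizing $S={\rm Mat}_\ell(R)$ carries the progenerator ${}_R R^\ell$ to the regular module ${}_S S$, i.e. $F({}_R R^\ell)\cong {}_S S$. Since $F$ is additive,
\[
{}_S S \cong F({}_R R^{\ell}) \cong F(R)^{\ell} \cong \bigoplus_{i=1}^m F(Re_i)^{\ell\, p_R(i)} \cong \bigoplus_{i=1}^m (Sg_i)^{\ell\, p_R(i)},
\]
and Krull--Schmidt yields $p_S(i)=\ell\, p_R(i)$ for each $i$. The only subtlety is to use for this count the index matching $F(Re_i)\cong Sg_i$ coming from $F$, so that it is consistent with the matching used for $q$ above. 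Combining with Theorem \ref{A7}: $S$ is left $k$-cyclic iff $S$ is representation-finite and $q_S(i)\le k\,p_S(i)$ for all $i$, i.e. iff $q_R(i)\le k\ell\,p_R(i)$ for all $i$, which by Theorem \ref{A7} is exactly the condition for $R$ to be left $k\ell$-cyclic. This proves the Claim.

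Finally I would assemble the proposition. The Claim with $\ell=n$ gives $(1)\Leftrightarrow(2)$. For $(2)\Rightarrow(3)$ I would first note that being left $j$-cyclic implies being left $j'$-cyclic whenever $j'\ge j$ (immediate from Theorem \ref{A7}, since $j\,p_R(i)\le j'\,p_R(i)$, or directly because a $j$-generated module is $j'$-generated); thus if $R$ is left $kn$-cyclic and $m\ge n$, then $km\ge kn$ forces $R$ to be left $km$-cyclic, whence ${\rm Mat}_m(R)$ is left $k$-cyclic by the Claim with $\ell=m$. Implication $(3)\Rightarrow(1)$ is the special case $m=n$. The main obstacle throughout is the multiplicity identity $p_{{\rm Mat}_\ell(R)}(i)=\ell\,p_R(i)$ and keeping its index matching compatible with Proposition \ref{A5}; once that is in place the remainder is bookkeeping.
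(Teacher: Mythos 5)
Your proposal is correct, but it takes a genuinely different route from the paper. The paper argues module-by-module through the Morita equivalence: for $(1)\Rightarrow(2)$ it takes a finitely generated indecomposable $R$-module $M$, notes its image under an equivalence $F:R\mbox{-Mod}\rightarrow {\rm Mat}_n(R)\mbox{-Mod}$ is $k$-generated, and invokes Lam's computation of minimal generator numbers under the $R\sim{\rm Mat}_n(R)$ equivalence (\cite[Example 17.23]{Lam2}, which gives $\mu_{{\rm Mat}_n(R)}(F(M))=\lceil \mu_R(M)/n\rceil$) to conclude $M$ is $kn$-generated; $(2)\Rightarrow(3)$ is the same transfer in the other direction using $kn\leq km$, and $(3)\Rightarrow(1)$ is the case $m=n$. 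You instead reduce everything to the numerical criterion of Theorem \ref{A7}: you keep $q$ fixed via Proposition \ref{A5} and compute how $p$ scales, proving $p_{{\rm Mat}_\ell(R)}(i)=\ell\,p_R(i)$ from $F({}_RR^\ell)\cong {}_SS$ together with Krull--Schmidt (valid since $S$ is semiperfect in the representation-finite case, and the non-representation-finite case is vacuous on both sides). Your index-matching caution is exactly the right point to worry about, and your monotonicity observation (left $j$-cyclic implies left $j'$-cyclic for $j'\geq j$) cleanly handles $(2)\Rightarrow(3)$. What each approach buys: the paper's proof is shorter and more elementary, at the cost of citing an external result of Lam; yours is self-contained within the paper's own machinery (Theorem \ref{A7}, Propositions \ref{A4} and \ref{A5}), makes the scaling of projective multiplicities explicit, and in fact establishes the cleaner general statement that ${\rm Mat}_\ell(R)$ is left $k$-cyclic iff $R$ is left $k\ell$-cyclic, from which the whole proposition is bookkeeping. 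One presentational nit: you should fix $F$ to be the standard column-space equivalence from the outset, since $F(R^\ell)\cong S$ is a property of that particular equivalence, not of an arbitrary one; this costs nothing because the invariants $p$ and $q$ do not depend on the choice.
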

\begin{proof}
$(1) \Rightarrow (2).$ Assume that ${\rm{Mat}}_n(R)$ is a left k-cyclic ring. Then by \cite[Propositions 53.6 and 54.3]{Wi}, ${\rm{Mat}}_n(R)$ is a representation-finite ring. Since $R \approx {\rm{Mat}}_n(R)$,  $R$ is a representation-finite ring. By \cite[Proposition 54.3]{Wi}, it is sufficient to show that every non-cyclic finitely generated indecomposable left $R$-module is $kn$-generated.  Let $M$ be a non-cyclic finitely generated indecomposable left $R$-module and $F:R{\rm -Mod} \rightarrow  {\rm{Mat}}_n(R){\rm -Mod}$ be an equivalence. Then $F(M)$ is a finitely generated indecomposable left $S$-module. Consequently,  $F(M)$ is a k-generated left $S$-module. Therefore by \cite[Example 17.23]{Lam2}, $M$ is a $kn$-generated left $R$-module.\\
$(2) \Rightarrow (3).$ Assume that $R$ is a left $kn$-cyclic  ring. Then by \cite[Propositions 53.6 and 54.3]{Wi}, $R$ is a representation-finite ring. Let $m$ be a positive integer such that $m \geq n$. Set  $T={\rm{Mat}}_m(R)$.  Since  $R \approx T$, $T$ is a representation-finite ring. By \cite[Theorem 54.3]{Wi}, it is sufficient to show that every finitely generated indecomposable left $T$-module is k-generated. Let $X$ be a finitely generated indecomposable left $T$-module and  $G:T{\rm -Mod} \rightarrow R{\rm -Mod}$ be an equivalence. Then $G(X)$ is a finitely generated indecomposable left $R$-module. It follows that $G(X)$ is a $kn$-generated module. Thus there exists an epimorphism $\alpha: R^{kn} \rightarrow G(X)$. Consequently, by \cite[Example 17.23]{Lam2}, there exists an epimorphism $S^k \rightarrow X$. Therefore $X$ is a k-generated left $S$-module.\\
$(3) \Rightarrow (1)$ is clear.
\end{proof}

Now we are in a position to give a characterization of representation-finite rings.

\begin{Cor}\label{D1}
The following conditions are equivalent for a ring $R$.
\begin{itemize}
\item[$(1)$] $R$ is a representation-finite ring.
\item[$(2)$] There exists a basic ring $S$ and a positive integer $n$ such that ${\rm Mat}_n(S)$ is a left K\"othe ring and $R \approx {\rm Mat}_n(S)$.
\item[$(3)$] There exists a positive integer $n$ such that  $R$ is a left n-cyclic ring.
\item[$(4)$] There exists a positive integer $n$ such that ${\rm{Mat}}_n(R)$ is a left K\"othe ring.
\item[$(5)$] There exists a positive integer $n$ such that for each $m \geq n$, ${\rm{Mat}}_m(R)$ is a left K\"othe ring.
\end{itemize}
\end{Cor}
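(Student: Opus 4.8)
The plan is to read off the entire chain of equivalences from three results already in hand: Proposition \ref{B4} delivers $(1)\Leftrightarrow(2)$ verbatim, Theorem \ref{A7} bridges representation-finiteness and the $n$-cyclic condition, and Proposition \ref{B6} (specialized to $k=1$, where ``left $1$-cyclic'' coincides with ``left K\"othe'') transports everything to matrix rings. So the only genuinely new step is $(1)\Rightarrow(3)$; the rest is a careful marshalling of quantifiers and citations. Concretely, for $(1)\Leftrightarrow(2)$ I would simply invoke Proposition \ref{B4}, which is exactly this equivalence.

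For $(1)\Rightarrow(3)$, suppose $R$ is representation-finite. Then $R$ is left artinian, so $R\cong\bigoplus_{i=1}^m (Re_i)^{p_R(i)}$ with each $p_R(i)\geq 1$, and there are only finitely many finitely generated indecomposables $M_1,\dots,M_t$. Hence each $q_R(i)=\max_j c_i({\rm top}(M_j))$ is a finite number, and there are only finitely many indices $i$. Setting $n=\max_{1\le i\le m} q_R(i)$ gives $q_R(i)\le n\le n\,p_R(i)$ for every $i$, so condition $(3)$ of Theorem \ref{A7} is satisfied for this $n$; therefore $R$ is left $n$-cyclic. The converse $(3)\Rightarrow(1)$ is immediate from the implication $(1)\Rightarrow(3)$ of Theorem \ref{A7} (a left $n$-cyclic ring is representation-finite), or directly from \cite[Propositions 53.6 and 54.3]{Wi}.

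For the two matrix-ring conditions I would apply Proposition \ref{B6} with $k=1$. Its equivalence $(1)\Leftrightarrow(2)$ then reads: ${\rm Mat}_n(R)$ is left K\"othe if and only if $R$ is left $n$-cyclic; quantifying over $n$ yields $(3)\Leftrightarrow(4)$. Its equivalence $(1)\Leftrightarrow(3)$ reads: ${\rm Mat}_n(R)$ is left K\"othe if and only if ${\rm Mat}_m(R)$ is left K\"othe for every $m\ge n$; quantifying over $n$ yields $(4)\Leftrightarrow(5)$ (the implication $(5)\Rightarrow(4)$ being trivial by taking $m=n$). Combined with $(1)\Leftrightarrow(2)$ and $(1)\Leftrightarrow(3)$, this closes the loop among all five conditions.

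The one point that deserves a word of care, and the place I expect scrutiny, is the tacit identification of ``left K\"othe'' with ``left $1$-cyclic'' that makes Proposition \ref{B6} applicable to conditions $(4)$ and $(5)$. In a left K\"othe ring every module is a direct sum of cyclic modules and, by artinianness, a direct sum of indecomposables; since an indecomposable direct summand of a direct sum of cyclics is itself cyclic, every finitely generated indecomposable is cyclic, so left K\"othe is precisely left $1$-cyclic and the substitution $k=1$ is legitimate. Beyond this identification there is no real obstacle: the structural work has already been absorbed into Theorem \ref{A7}, Proposition \ref{B4} and Proposition \ref{B6}, and the present corollary is essentially their formal combination.
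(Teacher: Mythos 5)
Your proof is correct, and its skeleton matches the paper's: Proposition \ref{B4} for $(1)\Leftrightarrow(2)$ and Proposition \ref{B6} with $k=1$ for $(3)\Leftrightarrow(4)\Leftrightarrow(5)$, together with the identification of left K\"othe with left $1$-cyclic (which the paper also uses silently, e.g.\ in deducing Corollary \ref{A6} from Theorem \ref{A7}; your explicit justification of this point is a welcome addition, though note the simplest argument is that a finitely generated indecomposable module over a left K\"othe ring is itself a direct sum of cyclics and hence, being indecomposable, cyclic). The one place you genuinely diverge is the link to condition $(3)$: the paper proves $(2)\Rightarrow(3)$ by invoking Corollary \ref{B3}, which asserts that every ring Morita equivalent to a left K\"othe ring is left $k$-cyclic for some $k$ and whose proof runs through progenerators and Hom-functor computations, whereas you prove $(1)\Rightarrow(3)$ directly from Theorem \ref{A7}: representation-finiteness makes each $q_R(i)$ finite, so $n=\max_i q_R(i)$ satisfies $q_R(i)\le n\le n\,p_R(i)$, and condition $(3)$ of Theorem \ref{A7} yields that $R$ is left $n$-cyclic. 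Your route is more elementary and self-contained---it bypasses Corollary \ref{B3} entirely and produces an explicit value of $n$---while the paper's route simply reuses a corollary it had already established, keeping its proof of the present statement to a list of citations. Both arguments close the cycle the same way, via $(3)\Rightarrow(1)$ (or the paper's $(4)\Rightarrow(1)$) from the standard facts in \cite{Wi}.
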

\begin{proof}
$(1) \Leftrightarrow(2).$ It follows  from Proposition \ref{B4}.\\
 $(2) \Rightarrow (3).$ It follows from Corollary \ref{B3}.\\
 $(3) \Leftrightarrow (4) \Leftrightarrow (5).$ It follows from Proposition \ref{B6}.\\
 $(4) \Rightarrow (1)$ is clear.
\end{proof}

\section*{acknowledgements} The research of the first author was in part supported by a grant from Iran National Sciences Foundation: INSF and supported by a grant from IPM (No. 96160069). Also, the research of the second author was in part supported by a grant from IPM (No. 96170419).

\end{document}